\newtheorem{theorem}{Theorem}[section]
\newtheorem{lemma}[theorem]{Lemma}
\theoremstyle{definition}
\theoremstyle{remark}
\newtheorem{remark}[theorem]{Remark}
\numberwithin{equation}{section}
\newcommand{\R}{\mathbb R}
\newcommand{\bD}{\mathbf D}
\newcommand{\bH}{\mathbf H}
\newcommand{\bI}{\mathbf I}
\newcommand{\bP}{\mathbf P}
\newcommand{\bn}{\mathbf n}
\newcommand{\bp}{\mathbf p}
\newcommand{\bs}{\mathbf s}
\newcommand{\bx}{\mathbf x}
\newcommand{\T}{\mathcal T}
\newcommand{\cT}{\mathcal F}
\newcommand{\uDelta}{{\Delta}_{\Gamma}}
\newcommand{\unabla}{{\nabla}_{\Gamma}}
\newcommand{\rd}{\mathrm{d}}
\newcommand{\la}{\left\langle}
\newcommand{\ra}{\right\rangle}
\def\dO{{\partial\Omega} }
\def\div{\operatorname{div} }
\begin{document}

\title[Unfitted  FEM for PDEs on surfaces]{A narrow-band unfitted  finite element method for elliptic PDEs posed on surfaces}
\author{Maxim A. Olshanskii}
\author{Danil Safin}

\address{Department of Mathematics, University of Houston, Houston, Texas 77204-3008} 
\email{(molshan,dksafin)@math.uh.edu}
\thanks{This work has been supported  by  NSF through the Division of Mathematical Sciences grant 1315993}
\subjclass[2000]{65N15, 65N30, 76D45, 76T99}

\begin{abstract}
The paper studies a method for solving elliptic partial differential equations
posed on hypersurfaces in $\mathbb{R}^N$, $N=2,3$. The method allows a surface to be given implicitly as a zero level of a level set function.  A surface equation is extended to a narrow-band neighborhood of the surface. The resulting extended equation is a non-degenerate PDE and it is solved
on a bulk mesh that is unaligned to the surface. An unfitted finite element method is used to discretize extended equations.   Error estimates are proved for finite element solutions in the bulk domain and restricted to the surface. The analysis admits finite elements of a higher order and gives sufficient conditions for
archiving the optimal convergence order in the energy norm. Several numerical examples illustrate the properties of the method.
 \end{abstract}


\maketitle


\section{Introduction}
Partial differential equations posed on surfaces arise in mathematical models for many natural phenomena: diffusion along grain boundaries \cite{grain2}, lipid interactions in biomembranes \cite{membrains1}, and transport of surfactants on multiphase flow interfaces \cite{GrossReuskenBook}, as well as in many engineering and bioscience applications: vector field visualization \cite{vector}, textures synthesis \cite{texture1}, brain warping \cite{imaging2}, fluids in lungs \cite{lungs} among others.
Thus, recently there has been a significant increase of interest  in developing and analyzing numerical methods for  PDEs on surfaces.

One natural approach to solving PDEs on surfaces numerically is based on surface triangulation.  In this class of methods, one typically assumes that a parametrization of a surface is given and the surface is approximated by a family of consistent  regular triangulations. It is common to assume that all nodes of the triangulations lie on the surface. The analysis of a finite element method based on surface triangulations was first done in~\cite{Dziuk88}. To avoid surface triangulation and remeshing (if the surface  evolves), another approach was taken in \cite{BCOS01}: It was proposed to extend a partial differential equation from the surface to a set of positive Lebesgue measure in $\R^N$. The resulting PDE is then solved in one dimension higher, but can be solved on a mesh that is unaligned to the surface.
A surface is allowed  to be defined implicitly as a zero set of  a given level set function. However, the resulting bulk elliptic or parabolic equations are degenerate, with no diffusion acting  in the direction normal to the surface. A version of the method, where only an $h$-narrow band around the surface is used to define a finite element method, was studied in  \cite{DDEH}. A fairly complete overview of finite element methods for surface PDEs and more references can be found in the recent review paper~\cite{DE2013}.

For an elliptic equation on a compact hypersurface, the present  paper considers a new extended non-degenerate formulation, which is uniformly elliptic in a bulk domain containing the surface. We analyze a Galerkin finite element method for solving the extended equation. The bulk domain is allowed to be a narrow band around the surface with width proportional to a mesh size. Thus the number of degrees of freedom used in computations stays asymptotically optimal, when the mesh size decreases. The finite element method we apply here is unfitted: The mesh does not respect the surface or the boundary of the narrow band. This property is important from the practical point of view. No parametrization of the surface is required by the method. The surface can be given implicitly and the implementation requires only an approximation of its distance function. We analyse the approximation properties of the method and prove  error estimates for finite element solutions in the bulk domain and restricted to the surface. The analysis allows finite elements of higher order and gives sufficient conditions for archiving optimal convergence order in the energy norm. We remark that up to date the analysis of higher order finite element methods for surface PDEs is largely an open problem: In \cite{Demlow09} a higher order extension of the  method from \cite{Dziuk88} was analysed under the assumption that a parametrization of $\Gamma$ is known.
The analysis of a coupled surface-bulk problem from \cite{ER2013} also admits a higher order discretization by isoparametric finite elements on a triangulation fitted to a given surface.

Another unfitted finite element method for elliptic equations posed on surfaces was introduced in~\cite{ORG09,OlsR2009}. That method does not use an extension of the surface partial differential equation.  It is instead based on a restriction (trace) of the outer  finite element spaces to a surface. We do not compare these two different approaches in the paper. 

The remainder of the paper is organized as follows.  Section~\ref{s_prel} collects some necessary definitions and preliminary results. In section~\ref{s_form}, we recall the extended PDE approach from \cite{BCOS01} and
 introduce a different non-degenerate extended formulation. In section~\ref{s_FEM}, we consider a finite element method. Finite element method error analysis is presented in section~\ref{s_error}. Section~\ref{s_numer} shows the result of several numerical experiments. Finally, section~\ref{s_concl} collects some closing remarks.

\section{Preliminaries}\label{s_prel}
We assume that $\Omega$ is an open subset in $\R^N$, $N=2,3$ and $\Gamma$ is a connected $C^2$ compact
hypersurface contained in $\Omega$.
For a sufficiently smooth function $g: \Omega \to \R$ the tangential
gradient (along $\Gamma$) is defined by
\begin{equation*}
  \unabla g=\nabla g - (\bn_\Gamma\cdot\nabla g) \, \bn_\Gamma,
\end{equation*}
where $\bn_\Gamma$ is the outward normal vector on $\Gamma$.
By $\uDelta$ we denote  the Laplace--Beltrami operator on
$\Gamma$, $\uDelta=\nabla_\Gamma\cdot\nabla_\Gamma$.

This paper deals with elliptic  equations posed on $\Gamma$. As a model problem, we
consider the  Laplace--Beltrami  problem:
\begin{equation}\label{LBeq}
-\uDelta u + \alpha\, u =f\quad\text{on}~\Gamma,
\end{equation}
with some strictly positive $\alpha\in L^\infty(\Gamma)$.
The corresponding weak form of \eqref{LBeq} reads: For given $f\in L^2(\Gamma)$  determine $u \in H^1(\Gamma)$ such that
\begin{equation}\label{weak}
\int_\Gamma \unabla u \unabla v+\alpha\,uv\,\rd\bs= \int_\Gamma f v\, \rd\bs\qquad\text{for all}~~v\in H^1(\Gamma).
\end{equation}
The solution $u$ to \eqref{weak} is unique and  satisfies $u\in H^2(\Gamma)$, with $\|u\|_{H^2(\Gamma)}\le
c\|f\|_{L^2(\Gamma)}$ and a constant $c$ independent of $f$, cf.~\cite{Dziuk88}.
\medskip

Further we consider a surface embedded in $\R^3$, i.e. $N=3$. With obvious  minor modifications all results hold if $\Gamma$ is a curve in $\R^2$.
Denote by $\Omega_d$ a domain consisting of all points within a distance from $\Gamma$ less than some $d>0$:
\begin{equation}\label{Omega_d}
\Omega_d = \{\, \bx \in  \R^3~:~{\rm dist}(\bx,\Gamma) < d\, \}.
\end{equation}
Let $\phi: \Omega_d \rightarrow \R$ be the
signed distance function, $|\phi(x)|:={\rm dist}(\bx,\Gamma)$ for all
$\bx \in \Omega_d$. The surface $\Gamma$ is the zero level
set of $\phi$:
\begin{equation*}
\Gamma=\{\bx\in\mathbb{R}^3\,:\,\phi(\bx)=0\}.
\end{equation*}
We may assume $\phi < 0$ on the interior of $\Gamma$  and $\phi >0$ on the exterior.
We define $\bn(\bx):=\nabla \phi(\bx)$ for all
$\bx \in \Omega_d$. Thus, $\bn=\bn_{\Gamma}$ on $\Gamma$,  and $|\bn (\bx)|=1$ for all $\bx\in \Omega_d$.  The Hessian of $\phi$ is denoted by $\bH$:
\begin{equation*}
  \bH(\bx)=\mathrm{D}^2\phi(\bx) \in \R^{3 \times 3} \quad \text{for all} ~~\bx \in \Omega_d.
\end{equation*}
The eigenvalues of $\bH(\bx)$ are  $\kappa_1(\bx),
\kappa_2(\bx)$, and 0. For  $\bx \in \Gamma$, the eigenvalues $\kappa_i(\bx)$, $i=1,2$, are
the principal curvatures.

We need the orthogonal projector
\[
 \bP(\bx)= \bI-\bn(\bx)\otimes\bn(\bx) \quad \text{for all}~~\bx \in \Omega_d.
\]
Note that the tangential gradient can be written as $\unabla g(\bx)= \bP \nabla g(\bx)$ for
$\bx \in \Gamma$.
We introduce a locally
orthogonal coordinate system by using the  projection $\bp:\, \Omega_d \rightarrow
\Gamma$:
\begin{equation}\label{loc_proj}
 \bp(\bx)=\bx-\phi(\bx)\bn(\bx) \quad \text{for all}~~\bx \in \Omega_d.
\end{equation}
Assume that $d$ is sufficiently small such that the decomposition $\bx=\bp(\bx)+ \phi(\bx)\bn(\bx)$ is unique for all $\bx \in \Omega_d$.
We shall use an extension operator  defined as follows. For a  function $v$ on $\Gamma$
we define
\begin{equation} \label{extension}
 v^e(\bx):= v(\bp(\bx)) \quad \text{for all}~~\bx \in \Omega_d.
\end{equation}
Thus, $ v^e$ is the extension of $v$ along normals on $\Gamma$, it satisfies $\bn\cdot \nabla v^e=0$ in $\Omega_d$, i.e., $v^e$ is constant along normals to $\Gamma$.
Computing the gradient of $v^e(\bx)$ and using \eqref{loc_proj} and \eqref{extension} gives
\begin{equation}
\nabla  v^e(\bx)  = (\mathbf{I} -  \phi(\bx)\bH(\bx))\unabla v(\bp(\bx))\quad \text{for}~~\bx \in\Omega_{d}.
 \label{nabla1}
\end{equation}
For higher order derivatives, assume the surface is sufficiently smooth $\Gamma\in C^{k+1}$, $k=2,3,\dots$.
This yields $\phi\in C^{k+1}(\Omega_d)$, see \cite{DistFunc}, and hence $\bp(\bx)\in [C^k(\Omega_d)]^3$. Differentiating  \eqref{extension} gives for a sufficiently smooth $v$
\begin{equation}
|D^\alpha v^e(\bx)|  \le c\,\sum_{l=1}^k \sum_{|\mu|=l} |D^\mu_\Gamma v(\bp(\bx))|\quad \text{for}~~\bx \in\Omega_{d},\quad |\alpha|=k,
 \label{Dk}
\end{equation}
where a constant $c$ can be taken independent of $\bx$ and $v$.

From (2.5) in \cite{DD07} we have the following formula for the eigenvalues of $\bH$:
\begin{equation}\label{curv}
\kappa_i(\bx)= \frac{\kappa_i(\bp(\bx))}{1 + \phi(\bx)\kappa_i(\bp(\bx))}\quad \text{for} ~\bx \in \Omega_d.
\end{equation}
Since $\Gamma\in C^2$ and $\Gamma$ is compact, the principle curvatures of $\Gamma$ are uniformly bounded and $d$ can be taken sufficiently small to satisfy
 \begin{equation}\label{ass_d}
d\le \Big(\,2\max_{\bx\in\Gamma}(|\kappa_1(\bx)|+|\kappa_2(\bx)|)\,\Big)^{-1}.
\end{equation}
For such choice of $d$, we obtain using \eqref{curv}
 \begin{equation}\label{ass1}
 |\phi(\bx)|=\mbox{dist}(\bx,\Gamma)\le d\le \frac12\|\bH(\bx)\|^{-1}_2\quad\text{for}~\bx\in\Omega_d.
\end{equation}
 The inequality \eqref{ass1} yields the bounds for the spectrum and the determinant of the symmetric matrix $\bI-\phi\bH$:
 \begin{equation}\label{spec}
 \mbox{sp}(\bI-\phi\bH)\in\left[\mbox{$\frac12,\frac32$}\right],\quad \mbox{$\frac14$}\le\mbox{det}(\bI-\phi\bH)\le\mbox{$\frac94$}\quad
 \text{in}~\Omega_d.
 \end{equation}
Therefore, the matrix $(\bI-\phi\bH)^{-1}$ is well defined and its norm is uniformly bounded in $\Omega_d$.

\section{Extensions of the surface PDEs} \label{s_form}
In this section, we review some well-known results for numerical methods based on surface PDEs extensions and  define a suitable  extension of the surface equation \eqref{LBeq} to a neighborhood of~$\Gamma$.

\subsection{Review of results}
In \cite{BCOS01} Bertalmio et al. suggested to extend a PDE off a surface to every level set
 of the indicator function $\phi$ in some neighborhood of $\Gamma$. Applied to \eqref{LBeq} this leads to the
problem posed in $\Omega_d$:
 \begin{equation}
\label{1.2}
-|\nabla\phi|^{-1}\div|\nabla\phi| \bP\nabla u+ \alpha^e\,u=f^e\quad  \hbox{ in } \Omega_d.
\end{equation}
The  corresponding weak  formulation of \eqref{1.2} was shown to be well-posed in \cite{Burger}.
The weak solution is sought  in the anisotropic space
\[
H_P=\{v\in L^2(\Omega_d)\,:\,\bP\nabla v\in (L^2(\Omega_d))^{3}\}.
\]
On every level set of $\phi$ the solution to \eqref{1.2}  does not depend
on a data in a neighborhood of this level set.  Indeed, the diffusion in \eqref{1.2} acts only in the direction
  tangential to level sets of $\phi$ and one may consider \eqref{1.2} as a collection of
of independent surface problems posed on every level set. Hence, the surface  equation \eqref{LBeq} is embedded in \eqref{1.2} and if a smooth solution to \eqref{1.2} exists, then
restricted to $\Gamma$ it  solves the original Laplace-Beltrami  problem \eqref{LBeq}. With no ambiguity,
we shall denote by $u$ both the solutions to surface and extended problems.

The major numerical advantage of any extended formulation is that one may apply standard
discretization methods to solve  equations in the volume domain $\Omega_d$ and further take the trace of  computed solutions on $\Gamma$ (or on a approximation of $\Gamma$). Computational experiments from \cite{BCOS01,Burger,Greer06,XuZhao} suggest that these traces of numerical solutions  are reasonably good approximations to the solution of the surface problem  \eqref{LBeq}.

Numerical analysis of surface equations discretization methods based on extensions  is by far not completed: Error estimates for finite element methods for \eqref{1.2} were shown in~\cite{Burger,DDEH}.
Error estimate in \cite{Burger} was established in the integral volume norm
\[
\|v\|_{H_P}^2:=\|v\|^2_{L^2(\Omega_d)}+\|\bP\nabla v\|^2_{L^2(\Omega_d)},
\]
rather than in a surface norm for $\Gamma$.
In \cite{DDEH}, a finite element method based on triangulations not fitted to the curvilinear boundary of $\Omega_d$ was
studied.  The first order convergence was proved in the surface $H^1$ norm, if the band width $d$ in \eqref{Omega_d} is of the order of mesh size and if a quasi-uniform triangulation of $\Omega$ is assumed. For linear elements this estimate is of the optimal order in energy norm.

The extended formulation \eqref{1.2} is numerically convenient, but has a number of potential issues, as noted already in   \cite{BCOS01} and reviewed in \cite{DDEH,Greer06}. No boundary conditions are needed for \eqref{1.2}, if the boundary of the bulk domain $\Omega_d$ consists of level sets of $\phi$. However some auxiliary boundary conditions are often required by a numerical method. The extended equation \eqref{1.2} is defined in a domain in one dimension higher than the surface equation. This leads to involving extra degrees of freedom in computations. If $\Omega_d$ is a narrow band around $\Gamma$, then handling numerical boundary conditions may effect the quality of the discrete solution.
Finally, the  second order term in the extended formulation \eqref{1.2} is degenerate, since no diffusion acts in the direction normal to level sets of $\phi$. The current understanding of   numerical methods for degenerate elliptic and parabolic equations is still limited.

An  improvement to the original extension of surface PDEs was introduced by  Greer in~\cite{Greer06}.
Greer suggested to use the  non-orthogonal scaled projection operator
\begin{equation}\label{tildeP}
\widetilde{\bP}:=(\bI-\phi \bH)^{-1} \bP
\end{equation}
on  tangential planes of the level sets of $\phi$ instead of $\bP$.  For a smooth $\Gamma$, one can always consider  small enough $d>0$ such that $\widetilde{\bP}$  is well defined in $\Omega_d$ .  If $\phi$ is the singed distance function and all data ($\alpha$ and $f$ for equations \eqref{1.2}) is extended to the neighborhood of $\Gamma$  according to \eqref{extension}, i.e. constant along normals, then  one can easily show (see \cite{Greer06,ChOlsh}) that
the solution to the new extended equation is constant in normal directions:
\begin{equation}\label{const}
(\bn\cdot\nabla) u=0\quad\text{a.e. in}~\Omega_d.
\end{equation}
The property \eqref{const} is crucial, since it allows to add diffusion in the normal direction without altering solution. Doing this, one obtains a non-degenerated elliptic operator.
Thus, for solving the heat equation on a surface,   it was suggested in \cite{Greer06} to include the additional term
$
-c^2_n\div(\bn\otimes\bn)\nabla u
$
 in the extended formulation  with a coefficient  $c^2_n$. For the planar case, $\Omega_d\in\mathbb{R}^2$, the recommendation was to set $c_n=(1-\phi\kappa_0)$,  $\kappa_0=\kappa(p(\bx))$, $\kappa$ is the curvature of $\Gamma$ ($\Gamma$ is a curve in the planar case).

\subsection{Non-degenerate extended equations}
Here we deduce another extension of \eqref{LBeq}: Let $\phi$ be the signed distance function and $\mu=\mbox{det}(\bI-\phi\bH)$,  $\alpha^e$ and $f^e$ are the normal extensions to $\alpha$ and $f$. We look for $u$ solving the following elliptic problem
 \begin{equation}
\label{ExtNew}
\begin{split}
-\div\mu(\bI-\phi\bH)^{-2}\nabla u+\alpha^e\mu\, u&=f^e\mu\quad \text{in}~~\Omega_d\\
\frac{\partial u}{\partial \bn}&=0\qquad \text{on}~~\dO_d.
\end{split}
\end{equation}
The Neumann boundary condition in \eqref{ExtNew} is the natural boundary condition. To see this, note the identity $\bH\bn=0$  and  that $\bn$ coincides (up to a sign) with a normal vector on the boundary of $\Omega_d$.
 Hence, one has $(\bI-\phi\bH)^{-1}\bn=\bn$ and for a sufficiently smooth $u$:
\[
((\bI-\phi\bH)^{-2}\nabla u) \cdot \bn= (\nabla u) \cdot ((\bI-\phi\bH)^{-2}\bn)=(\nabla u)\cdot\bn=\frac{\partial u}{\partial \bn}=0\quad \text{on}~~\dO_d.
\]
The weak formulation of \eqref{ExtNew} reads:  Find $u\in H^1(\Omega_d)$ satisfying
\begin{equation}\label{weak_bulk}
\int_{\Omega_d}\left[(\bI-\phi\bH)^{-2}\nabla u\cdot \nabla v + \alpha^e\,uv\right]\mu\,d\bx=\int_{\Omega_d}f^ev\mu\,d\bx
\quad \hbox{for all } v\in H^1(\Omega_d).
\end{equation}
Thanks to \eqref{spec} the corresponding bilinear form
 \[
 a(u,v):=\int_{\Omega_d}\left[(\bI-\phi\bH)^{-2}\nabla u\cdot \nabla v + \alpha^e\,uv\right]\mu\,d\bx
 \]
 is continuous and coercive on $H^1(\Omega_d)$.

 The next theorem states several  results about the well-posedness of \eqref{ExtNew} and its relation to the
 surface equations \eqref{LBeq}.
\begin{theorem}\label{Th0}
Assume $\Gamma\in C^2$, $\alpha\in L^\infty(\Gamma)$, $f\in L^2(\Gamma)$. The following assertions hold:\\[-2ex]
\begin{description}
\item{}(i) The problem~\eqref{ExtNew} has the unique weak solution
$u\in H^1(\Omega_d)$, which satisfies  $\|u\|_{H^1(\Omega_d)}\le C\,\|f^e\|_{L^2(\Omega_d)}$,
with a constant $C$ depending only on $\alpha$ and $\Gamma$;\\[-2ex]
\item{}(ii) For the solution $u$ to \eqref{ExtNew} the trace function $u|_{\Gamma}$ is
an element of $H^1(\Gamma)$ and solves the weak formulation of the surface equation \eqref{weak}.\\[-2ex]
\item{}(iii) The solution $u$ to \eqref{ExtNew} satisfies \eqref{const}. Using the notion of normal extension, this can be written as
$
u = (u|_\Gamma)^e~ \text{in}~\Omega_d;
$\\[-2ex]
\item{}(iv)  Additionally assume $\Gamma\in C^3$, then $u\in H^{2}(\Omega_d)$ and
$
\|u\|_{H^2(\Omega_d)}\le C\,\|f^e\|_{L^2(\Omega_d)},
$
with a constant $C$ depending only on  $\alpha$, $\Gamma$ and $d$;\\[-2ex]
\end{description}
\end{theorem}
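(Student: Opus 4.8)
The plan is to dispatch (i) by Lax--Milgram and then obtain (ii)--(iv) from a single change of variables into normal coordinates that diagonalizes the whole construction. For (i), the excerpt already records that $a(\cdot,\cdot)$ is continuous and coercive on $H^1(\Omega_d)$ thanks to the spectral bounds \eqref{spec}, while $v\mapsto\int_{\Omega_d}f^ev\,\mu\,\rd\bx$ is a bounded functional on $L^2(\Omega_d)$. Lax--Milgram then yields the unique $u\in H^1(\Omega_d)$, and testing with $v=u$ gives $c_0\|u\|_{H^1(\Omega_d)}^2\le a(u,u)=\int_{\Omega_d}f^eu\,\mu\,\rd\bx\le C\|f^e\|_{L^2(\Omega_d)}\|u\|_{H^1(\Omega_d)}$, where $c_0$ comes from \eqref{spec} and $\min\alpha$ only, so the resulting constant depends only on $\alpha$ and $\Gamma$.

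The central device for the remaining parts is the map $\Phi:\Gamma\times(-d,d)\to\Omega_d$, $\Phi(\bs,r)=\bs+r\,\bn(\bs)$, with inverse $\bx\mapsto(\bp(\bx),\phi(\bx))$. First I would record two computations. Using \eqref{curv} one checks that the Jacobian of $\Phi$ equals $\mu^{-1}$, so that $\mu\,\rd\bx=\rd\bs\,\rd r$: the weight $\mu$ is chosen precisely to turn the bulk measure into the product measure. Next, writing any $u\in H^1(\Omega_d)$ as $\hat u(\bp(\bx),\phi(\bx))$ and differentiating as in \eqref{nabla1} gives $\nabla u=(\bI-\phi\bH)\unabla\hat u+(\partial_r\hat u)\,\bn$, with the two pieces orthogonal. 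Since $\bH\bn=0$ forces $(\bI-\phi\bH)^{-2}\bn=\bn$ and $\bI-\phi\bH$ preserves the tangent space, the quadratic form collapses, the factors $(\bI-\phi\bH)$ cancelling against $(\bI-\phi\bH)^{-2}$: one gets $(\bI-\phi\bH)^{-2}\nabla u\cdot\nabla v=\unabla\hat u\cdot\unabla\hat v+\partial_r\hat u\,\partial_r\hat v$. Combined with $\mu\,\rd\bx=\rd\bs\,\rd r$ and the $r$-independence of $\alpha^e,f^e$, the weak form \eqref{weak_bulk} becomes the separated problem
\[
\int_{-d}^{d}\!\!\int_\Gamma\big[\unabla\hat u\cdot\unabla\hat v+\partial_r\hat u\,\partial_r\hat v+\alpha\,\hat u\hat v\big]\,\rd\bs\,\rd r=\int_{-d}^{d}\!\!\int_\Gamma f\,\hat v\,\rd\bs\,\rd r
\]
for all $v\in H^1(\Omega_d)$, the correspondence $u\mapsto\hat u$ being a Sobolev isomorphism because $\mu$ and the Jacobian are bounded above and below by \eqref{spec}.

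To prove (ii) and (iii) simultaneously I would exhibit the solution explicitly. Let $u_0\in H^1(\Gamma)$ be the unique solution of the surface problem \eqref{weak}, and set $w:=u_0^e$, which lies in $H^1(\Omega_d)$ by \eqref{nabla1}. Since $\hat w(\bs,r)=u_0(\bs)$ has $\partial_r\hat w\equiv0$, the separated identity for $w$ tested against arbitrary $v$ reduces, for a.e.\ fixed $r$, to the surface weak form \eqref{weak} with test function $\hat v(\cdot,r)\in H^1(\Gamma)$ (admissible by Fubini after the change of variables); integrating in $r$ shows $a(w,v)=\int_{\Omega_d}f^ev\,\mu\,\rd\bx$ for all $v$. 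Hence $w$ solves \eqref{weak_bulk}, and uniqueness from (i) gives $u=w=u_0^e$. This is exactly (iii), $(\bn\cdot\nabla)u=0$, and taking traces gives $u|_\Gamma=u_0\in H^1(\Gamma)$ solving \eqref{weak}, which is (ii).

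Finally, for (iv) I would lift surface regularity through the extension. Under $\Gamma\in C^3$ one has $\phi\in C^3(\Omega_d)$ and $\bp\in[C^2(\Omega_d)]^3$, so \eqref{Dk} applies with $k=2$, giving $|D^\alpha u^e|\le c\,(|\unabla u_0(\bp)|+|D_\Gamma^2u_0(\bp)|)$ for $|\alpha|=2$. Using (iii) to write $u=u_0^e$, the elliptic estimate $\|u_0\|_{H^2(\Gamma)}\le c\|f\|_{L^2(\Gamma)}$ quoted after \eqref{weak} together with the change of variables (whose Jacobian is bounded by \eqref{spec}) bounds the second-order part by $C\,d\,\|u_0\|_{H^2(\Gamma)}^2\le C\,d\,\|f\|_{L^2(\Gamma)}^2$; combining with the lower-order bound from (i) and the comparability of $\|f^e\|_{L^2(\Omega_d)}^2$ with $d\,\|f\|_{L^2(\Gamma)}^2$ yields $\|u\|_{H^2(\Omega_d)}\le C\|f^e\|_{L^2(\Omega_d)}$ with $C$ depending on $\alpha$, $\Gamma$ and $d$. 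The main obstacle throughout is the change-of-variables bookkeeping of the second paragraph: verifying the Jacobian identity $\mu\,\rd\bx=\rd\bs\,\rd r$, the exact cancellation of the $(\bI-\phi\bH)$ factors, and the fibrewise use of the surface weak form at the $H^1$ level. Once that reduction is in place, (ii)--(iv) follow as above.
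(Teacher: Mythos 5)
Your argument is correct. For parts (i)--(iii) it is essentially the paper's own proof in different packaging: your global change of variables $\Phi(\bs,r)=\bs+r\,\bn(\bs)$ with $\mu\,\rd\bx=\rd\bs\,\rd r$ is exactly the combination of the coarea formula \eqref{L3_aux1} with the area-element identity \eqref{elements} that the paper uses level set by level set, and the cancellation $(\bI-\phi\bH)^{-2}\nabla u\cdot\nabla v=\unabla\hat u\cdot\unabla\hat v+\partial_r\hat u\,\partial_r\hat v$ rests on the same identities ($\bH\bn=0$, \eqref{nabla1}, and the commutation $\bP(\bI-\phi\bH)^{-2}=(\bI-\phi\bH)^{-2}\bP$) that drive the paper's computation from \eqref{Th0_aux1} to \eqref{Th0_aux2}; both arguments then exhibit $u_0^e$ as a weak solution and conclude by uniqueness. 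Where you genuinely diverge is part (iv): the paper obtains $H^2(\Omega_d)$ regularity by observing that $\Gamma\in C^3$ gives $\bH,\mu\in C^1(\Omega_d)$ and $\dO_d\in C^3$ and then citing elliptic regularity theory for the Neumann problem (Grisvard), whereas you bootstrap from the already-established identity $u=u_0^e$, the surface estimate $\|u_0\|_{H^2(\Gamma)}\le c\|f\|_{L^2(\Gamma)}$, and the derivative bound \eqref{Dk} with $k=2$. Your route is more self-contained and anticipates the technique the paper itself uses later in \eqref{Th2_aux3}; its only cost is a routine density argument to apply the pointwise bound \eqref{Dk} to $u_0\in H^2(\Gamma)$, and it makes the $d$-dependence of the constant transparent through $\|f^e\|_{L^2(\Omega_d)}^2\sim d\,\|f\|_{L^2(\Gamma)}^2$. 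Both approaches are sound; the paper's applies to the bulk PDE without using the special structure of the solution, while yours exploits that structure to avoid invoking boundary regularity theory.
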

\begin{proof} Since the bilinear form $a(u,v)$ is elliptic and continuous
in $H^1(\Omega_d)$, the Lax-Milgram lemma implies the result in (i). Assumption $\Gamma\in C^3$ yields $\phi\in C^3(\Omega_d)$, see \cite{DistFunc}, and hence $\bH,\mu\in C^1(\Omega_d)$ and $\dO_d\in C^3$. The regularity theory for elliptic PDEs with Neumann boundary data~\cite{Grisvard} implies the result in item (iv).

Now we are going to show how the bulk equation \eqref{ExtNew}  relates to the surface equation \eqref{LBeq}.
For $r\in(-d,d)$, denote by $\Gamma_r$ the level set surface on distance $r$ from~$\Gamma$:
\[
\Gamma_r=\{\bx\in\Omega_d~:~~ \phi(\bx)=r\}.
\]
Since $\phi$ is the sign distance function, the coarea formula gives
\begin{equation}\label{L3_aux1}
\int_{\Omega_d} f\,d\bx= \int_{-d}^d\int_{\Gamma_r} f\,d\bs\,dr\quad\mbox{for}~f\in L^1(\Omega_d).
\end{equation}
For area elements on $\Gamma_r$ and $\Gamma$ we have
\begin{equation}\label{elements}
\mu(\bx)\, d\bs(\bx)=\mbox{det}(\bI-\phi(\bx)\bH(\bx))\, d\bs(\bx)=d\bs(\bp(\bx))\quad\mbox{for}~\bx\in\Gamma_r.
\end{equation}

Denote by $u\in H^1(\Gamma)$ the unique  solution to the surface equations \eqref{LBeq}. Recall that $u^e\in H^1(\Omega_d)$ denotes the normal extension of $u$. From the weak formulation of the surface equation \eqref{weak} and transformation formulae \eqref{nabla1} and \eqref{elements} we infer
\[
\int_{\Gamma_r} \left[(\bI-\phi\bH)^{-2}\nabla u^e \nabla v^e+\alpha^e\,u^ev^e\right]\,\mu\rd\bs= \int_{\Gamma_r} f^e v^e\,\mu \rd\bs\quad\text{for any}~~v\in H^1(\Gamma).
\]
Since $\nabla v^e=\bP\nabla v^e$ and $\bP\nabla v$ is the tangential gradient which depends only on
values of $v$ on $\Gamma_r$, but not on an extension, we can rewrite the above identity as
\begin{equation}\label{Th0_aux1}
\int_{\Gamma_r} \left[(\bI-\phi\bH)^{-2}\nabla u^e \bP\nabla v+\alpha^e\,u^ev\right]\,\mu\rd\bs= \int_{\Gamma_r} f^e v\,\mu \rd\bs\quad\text{for any}~~v\in H^1(\Gamma_r).
\end{equation}
Assuming $v$ is a smooth function on $\Omega_d$, and so $v|_{\Gamma_r}\in H^1(\Gamma_r)$,
we can integrate \eqref{Th0_aux1} over all level sets for $r\in(-d,d)$ and apply the coarea formula
\eqref{L3_aux1} to obtain
\begin{equation}\label{Th0_aux2}
\int_{\Omega_d} \left[(\bI-\phi\bH)^{-2}\nabla u^e \bP\nabla v+\alpha^e\,u^ev\right]\,\mu\rd\bs= \int_{\Omega_d} f^e v\,\mu \rd\bs\quad\text{for any}~~v\in C^\infty(\Omega_d).
\end{equation}
Now we use $\bP=\bP^T$ and  $\bH\bP=\bP\bH\,\Rightarrow\,\bP(\bI-\phi\bH)^{-2}=(\bI-\phi\bH)^{-2}\bP$
to get from \eqref{Th0_aux2}
\begin{equation*}
\int_{\Omega_d} \left[(\bI-\phi\bH)^{-2}\nabla u^e \nabla v+\alpha^e\,u^ev\right]\,\mu\rd\bs= \int_{\Omega_d} f^e v\,\mu \rd\bs\quad\text{for any}~~v\in C^\infty(\Omega_d).
\end{equation*}
Applying the density argument we conclude that the normal extension of the
surface solution  $u^e$ solves the weak formulation \eqref{weak_bulk} of the bulk problem \eqref{ExtNew}.
Since the solution to \eqref{ExtNew} is unique, we have proved assertions (ii) and (iii) of the theorem.
\end{proof}

The formulation \eqref{ExtNew} has the following advantages over \eqref{1.2}:
The equation  \eqref{ExtNew} is non-degenerate and uniformly elliptic, the extended problem has no parameters to be defined,  the boundary conditions are given and consistent with \eqref{const}.
One theoretical advantage of the  formulation  \eqref{ExtNew} over \eqref{1.2} is that  the Agmon-Douglis-Nirenberg regularity theory  is readily  applicable if the data is smooth.

We remark that the volumetric formulation of surface equations can be easily extended for the case of anisotropic surface diffusion.
Indeed, let  $\bD(\bx)\in\left(L^{\infty}(\Gamma)\right)^{3\times3}$ be a symmetric positive definite tensor acting in tangential subspaces of $\Gamma$, i.e. $\bD\bn=0$ on $\Gamma$.
Consider the surface diffusion equation:
\begin{equation*}
-\mbox{div}_\Gamma \bD\nabla u + \alpha\, u =f\quad\text{on}~\Gamma.
\end{equation*}
Thanks to $\bD=\bD^T,~\bD\bn=0~\Rightarrow~\bP\bD^e=\bD^e\bP=\bD^e$, repeating the same arguments as in the proof of Theorem~\ref{Th0} leads to the extended problem:
 \begin{equation*}
\begin{split}
-\div\mu\widetilde{\bD}\nabla u+\alpha^e\mu\, u&=f^e\mu\quad \text{in}~~\Omega_d\\
\frac{\partial u}{\partial \bn}&=0\qquad \text{on}~~\dO_d,
\end{split}
\end{equation*}
with $\widetilde{\bD}=(\bI-\phi\bH)^{-1}[\bD^e+d^e\bn\otimes\bn](\bI-\phi\bH)^{-1}$, $\bD^e$ is the componentwise normal extension of $\bD$ and  $d\in L^{\infty}(\Gamma)$ is arbitrary positive on $\Gamma$. A reasonable choice of $d$ can be
the minimizer of the K-condition number\footnote{The definition of the K-condition number of a symmetric positive definite matrix $A\in\R^{n\times n}$ is $K(A)=\frac{(\mbox{tr}(A)/n)^n}{\mbox{det(A)}}$, see~\cite{Axelsson}.} of the volume diffusion tensor $\bD^e+d^e\bn\otimes\bn$ on $\Gamma$. One finds  $d=\frac1{N-1}\mbox{tr}(\bD)$, where $N=2,3$ is the outer space dimension. Note that the isotropic diffusion
problem \eqref{LBeq} fits this more general case with $\bD=\bP$.
Including anisotropic surface diffusion tensor would not bring any additional difficulty to the analysis below.
However, for the sake of brevity we consider further only isotropic diffusion.

\section{Finite element method}\label{s_FEM}
Let $\Gamma\in C^2$, $\Gamma\subset\Omega$, where $\Omega\subset\mathbb{R}^3$ is a polyhedral domain.
Assume we a given a family $\{\T_h\}_{h>0}$ of regular triangulations of $\Omega$ such that $\max_{T\in\T_h}\mbox{diam}(T) \le h$.
For a tetrahedron  $T$ denote by $\rho(T)$  the diameter of the inscribed ball.
Denote
\begin{equation}\label{beta}
\beta=\sup_{T\in\T_h}\mbox{diam}(T)/\inf_{T\in\T_h}\rho(T)\,.
\end{equation}
For the sake of analysis, we assume that triangulations of $\Omega$ are quasi-uniform, i.e., $\beta$ is uniformly bounded in $h$.
The band width $d$ satisfies \eqref{ass1} and such that $\Omega_d\subset\Omega$.

It is computationally convenient  not to align (not to fit) the mesh  to $\Gamma$ or  $\dO_d$.
Thus, the computational domain $\Omega_h$ will be a narrow band containing $\Gamma$ with a piecewise smooth boundary which is not fitted to
the mesh $\T_h$. 

 Let $\phi_h$ be a continuous piecewise smooth, with respect to $\T_h$, approximation  of the surface distance function. Assume $\phi_h$ is known  and  satisfies
\begin{equation}\label{phi_h}
\|\phi-\phi_h\|_{L^\infty(\Omega)}+ h \|\nabla(\phi-\phi_h)\|_{L^\infty(\Omega)}\le c\,h^{q+1}
\end{equation}
with some $q\ge1$. Then one defines
\begin{equation}\label{Omega_h}
\Omega_h=\{\, \bx \in  \R^3~:~|\phi_h(\bx)| < d\, \}.
\end{equation}
Note, that in some applications the surface $\Gamma$ may not be known explicitly and only a finite element  approximation $\phi_h$ to the distance function $\phi$ is known. Otherwise, one may set $\phi_h:=I_h(\phi)$, where  $I_h$ is a suitable
piecewise polynomial interpolation operator. If $\phi_h$ is a $P_1$ continuous finite element function, then
$\Omega_h$ has a piecewise planar boundary. In this practically convenient case, \eqref{phi_h} is assumed with $q=1$.

Alternatively,  for $\Gamma$ given explicitly one  may build a piecewise planar approximation to $\dO_d$ as suggested in \cite{BarrettElliott}.
We briefly recall it here. 
Assume $d\ge h$ (relaxing this assumption is possible, but requires additional technical  considerations).
Consider all tetrahedra that have vertices   both inside $\Omega_d$, $\phi(\bx_i) < d$, and outside, $\phi(\bx_j) > d$.
Let $p_{1}$ be a intersection point of $\Gamma$ with the edge $\overline{\bx_i\bx_j}$. For any tetrahedra there can be
three or four  such points $p_1,...,p_k$.  Inside each such tetrahedron,  $\partial\Omega_d$ is approximated by either by the plane $(p_1,p_2,p_3)$, if $k=3$, or by two pieces of planes $(p_1,p_2,p_3)$ and $(p_2,p_3,p_4)$.


Denote by $\T_d$ the set of all tetrahedra having nonempty intersection with $\Omega_h$:
\[
\T_d=\bigcup\limits_{T\in\T_h} \{T\,:~T\cap\Omega_h\neq\emptyset\}.
\]
We always assume that $\Gamma\subset\Omega_h\subset\T_d\subset\Omega_{d'}\subset\Omega$, with some $d'\le c\,d$ satisfying \eqref{ass_d}.

The space of all continuous piecewise polynomial functions of a degree $r\ge1$ with respect to   $\T_d$ is our finite element space:
\begin{equation}\label{FEspace}
V_h:=\{v\in C(\T_d)\,:~ v|_T\in P_r(T)\quad\forall\,T\in\T_d\},\quad r\ge1.
\end{equation}
The finite element method reads: Find $u_h\in V_h$ satisfying
\begin{equation}\label{FEmeth}
\int_{\Omega_h}\left[(\bI-\phi_h\bH_h)^{-2}\nabla u_h\cdot\nabla v_h + \alpha^e\,u_h v_h\right]\,\mu_hd\bx =\int_{\Omega_h} f^ev_h\,\mu_hd\bx\quad\forall\,v_h\in V_h.
\end{equation}
This is the method we analyse further  in this paper.

If $\Gamma$ is given explicitly, one can compute $\phi$ and $\bH$ and set  $\phi_h=\phi$, $\bH_h=\bH$ and $\mu_h=\mbox{det}(\bI-\phi_h\bH_h)$ in \eqref{FEmeth}.
Otherwise, if  the surface $\Gamma$ is known approximately as, for example, the zero level set of a finite element distance function $\phi_h$, then, in general, $\phi_h\neq \phi$  and one has to define a discrete Hessian $\bH_h\approx \bH$ and also set $\mu_h=\mbox{det}(\bI-\phi_h\bH_h)$. A discrete Hessian $\bH_h$  can be obtained from $\phi_h$ by a  recovery method, see, e.g.,~\cite{Hessian1,Hessian0}.
At this point, we assume that some $\bH_h$ is provided and denote by $p\ge 0$ the approximation order for $\bH_h$ in the (scaled) $L^2$-norm:
\begin{equation} \label{Hh}
|\Omega_h|^{-\frac12}\|\bH-\bH_h\|_{L^2(\Omega_h)}\le c h^p,
\end{equation}
where $|\Omega_h|$ denotes the volume of $\Omega_h$.

\begin{remark}\rm
From the implementation viewpoint, it is most convenient to use polyhedral (polygonal) computation domains $\Omega_h$, which
 corresponds to the second order approximation of $\dO_d$ ($q=1$ in \eqref{phi_h}).
It appears that in this case, the optimal order convergence result with $P_1$ finite elements  in narrow-band domains, $d=O(h)$, holds already for $p=0$ in \eqref{Hh}, e.g. \textit{$\bH_h=0$ is the suitable choice}.
This follows from the error analysis below and supported by the results of numerical experiments in Section~\ref{s_numer}.
\end{remark}

Finally, we assume  that $\phi_h$ and $\bH_h$ satisfy condition~\eqref{ass1}, which is a reasonable assumption once $d$ and $h$ are sufficiently small.  Hence  the $h$-dependent bilinear form
\[
a_h(u_h,v_h)=\int_{\Omega_h}\left[(\bI-\phi_h\bH_h)^{-2}\nabla u_h\cdot\nabla v_h + \alpha^e\,u_h v_h\right]\,\mu_hd\bx
\]
is continuous and elliptic uniformly in $h$.

\section{Error analysis}\label{s_error}

If $\phi_h=\phi$ and $\bH_h=\bH$, then \eqref{FEmeth} is closely related to the unfitted finite element method from \cite{BarrettElliott} for
an elliptic equation with Neumann boundary conditions. However, applied to \eqref{FEmeth} the analysis of \cite{BarrettElliott} does not account for the anisotropy of computational domain and leads to suboptimal convergence results in surface norms.
Therefore, to prove an optimal order convergence in the $H^1(\Gamma)$ norm, we use a different  framework, which also allows to cover the case $\phi_h\neq \phi$, $\bH_h\neq \bH$ and higher order finite elements.

We need a further mild  assumption on how well the mesh resolves the geometry.
Since $\Gamma\subset \Omega_h$ the boundary of $\Omega_h$ is decomposed into two disjoint sets, $\dO_h=\dO_h^+\cup\dO_h^-$,
such that $\phi>0$ on $\dO_h^+$ and $\phi<0$ on $\dO_h^-$.
 We assume that $\dO_h^{+}$ is a graph of a function $\eta_+(\bx)$, $\bx\in\Gamma$, in the local coordinates induced by the
  projection \eqref{loc_proj}. The same is assumed for $\dO_h^{-}$ and $\eta_-(\bx)$, $\bx\in\Gamma$.

To estimate the consistency error of the finite element method, we need results in the next two lemmas.

\begin{lemma}\label{L_Phi} Consider $\Omega_h$ as defined in \eqref{Omega_h} for some $\phi_h$ satisfying \eqref{phi_h}. For sufficiently small $h$, there exists a one-to-one mapping $\Phi_h:\,\Omega_h\to\Omega_d$, $\Phi_h\in \left(W^{1,\infty}(\Omega_h)\right)^3$ such that
\begin{equation}\label{mapping}
\|\mbox{\rm id}-\Phi_h\|_{L^\infty(\Omega_h)}+ h \|\bI-\mathrm{D}\Phi_h\|_{L^\infty(\Omega_h)}\le c\,h^{q+1},
\end{equation}
where $\mathrm{D}\Phi_h$ is the Jacobian matrix. Moreover, the mapping $\Phi_h$ is such that $\bp(\Phi_h(\bx))=\bp(\bx)$ for any $\bx\in\Omega_h$.
\end{lemma}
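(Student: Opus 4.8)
The plan is to build $\Phi_h$ fiberwise along the normals to $\Gamma$, using the tubular-neighborhood coordinates $\bx\leftrightarrow(\by,s)$ with $\by=\bp(\bx)\in\Gamma$ and $s=\phi(\bx)$, so that $\bx=\by+s\,\bn(\by)$. In these coordinates both $\Omega_d$ and $\Omega_h$ are fibered over $\Gamma$ by normal segments: the fiber of $\Omega_d$ over $\by$ is $\{|s|<d\}$, while by the graph assumption on $\dO_h^\pm$ the fiber of $\Omega_h$ is the interval $(\eta_-(\by),\eta_+(\by))$. I would define $\Phi_h$ to be, on each fiber, the unique affine (in $s$), increasing bijection of $(\eta_-(\by),\eta_+(\by))$ onto $(-d,d)$. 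This keeps every point on its own normal line, whence $\bp(\Phi_h(\bx))=\bp(\bx)$ automatically, and gluing the fibers yields a Lipschitz (hence $W^{1,\infty}$) bijection $\Omega_h\to\Omega_d$, well defined because $\eta_-<0<\eta_+$ for small $h$.

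First I would record the size of the graphs $\eta_\pm$. On $\dO_h^+$ one has $\phi_h=d$, while $\phi$ equals the normal coordinate, so at the boundary point $\by+\eta_+(\by)\bn(\by)$ we get $\eta_+(\by)-d=(\phi-\phi_h)(\by+\eta_+(\by)\bn(\by))$; with \eqref{phi_h} this gives $|\eta_+-d|\le c\,h^{q+1}$, and likewise $|\eta_-+d|\le c\,h^{q+1}$. For the tangential derivative I would differentiate this identity along $\Gamma$: the chain rule produces $\unabla\eta_+$ on both sides, through the derivative of the map $\by\mapsto\by+\eta_+\bn$. Since $\|\nabla(\phi-\phi_h)\|_{L^\infty}\le c\,h^q$ is small for small $h$, the self-referential $\unabla\eta_+$ term on the right can be absorbed into the left, yielding $\|\unabla\eta_\pm\|_{L^\infty(\Gamma)}\le c\,h^q$.

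It is then convenient to write $\Phi_h(\bx)=\bx+g(\bx)\,\nabla\phi(\bx)$, where $g(\bx)=\psi(\bp(\bx),\phi(\bx))-\phi(\bx)$ and $\psi(\by,\cdot)$ is the affine map above; here I use $\bn(\bx)=\nabla\phi(\bx)=\bn(\bp(\bx))$. Since $\psi(\by,s)-s$ is affine in $s$ with endpoint values $d-\eta_+$ and $-d-\eta_-$, both $O(h^{q+1})$ by the previous step, and an affine function attains its extrema at the endpoints, we get $\|\id-\Phi_h\|_{L^\infty(\Omega_h)}=\|g\|_{L^\infty}\le c\,h^{q+1}$. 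For the Jacobian, the representation above gives the rank-structured identity $\bI-\mathrm{D}\Phi_h=-\nabla\phi\otimes\nabla g-g\,\bH$, using $\mathrm{D}(\nabla\phi)=\bH$. The second term is $O(h^{q+1})$ since $\bH$ is bounded, and the first has norm $|\nabla g|$ because $|\nabla\phi|=1$. Writing $\psi(\by,s)=c_1(\by)s+c_0(\by)$ with $c_1=2d/(\eta_+-\eta_-)$ and expanding $\nabla g$ via \eqref{nabla1}, the estimates $|c_1-1|\le c\,h^{q+1}/d$, $\|\unabla c_1\|\le c\,h^q/d$, $\|\unabla c_0\|\le c\,h^q$, together with $|\phi|\le d$, yield $|\nabla g|\le c\,h^q$ in the regime $d\gtrsim h$ (the band width being taken proportional to the mesh size). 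Combining the two bounds gives $h\|\bI-\mathrm{D}\Phi_h\|_{L^\infty}\le c\,h^{q+1}$, which is \eqref{mapping}.

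I expect the main obstacle to be the tangential estimate $\|\unabla\eta_\pm\|\le c\,h^q$: it is not a direct consequence of \eqref{phi_h}, because the implicit definition of $\eta_\pm$ makes its gradient appear on both sides, and one must verify that the self-referential term is a genuinely small perturbation (needing $h$ small so that $\nabla\phi_h\cdot\bn$ stays bounded away from zero, which also secures transversality and hence the graph representation) before absorbing it. The secondary care point is the bookkeeping of the factors of $d$: several intermediate quantities carry $h^{q+1}/d$ or $h^q/d$, and the clean $O(h^q)$ bound for $\nabla g$ survives only because $|\phi|\le d$ compensates the $1/d$ and because $d\gtrsim h$.
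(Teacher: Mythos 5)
Your proposal is correct and follows essentially the same route as the paper: both derive $|\eta_\pm\mp d|\le c\,h^{q+1}$ and $\|\unabla\eta_\pm\|_{L^\infty(\Gamma)}\le c\,h^{q}$ from the implicit definition of the graphs of $\dO_h^\pm$ and then build $\Phi_h$ fiberwise along normals, which makes $\bp\circ\Phi_h=\bp$ automatic. The only immaterial difference is that the paper uses two linear maps per fiber that fix $\Gamma$ pointwise (essentially $s\mapsto s\,d/\eta_\pm$ on each side) rather than your single affine bijection of $(\eta_-,\eta_+)$ onto $(-d,d)$; both constructions rely in the same way on $d\gtrsim h$ to absorb the $h^{q+1}/d$ terms in the Jacobian estimate.
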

\begin{proof}
Since $\Gamma\in C^2$ and $\dO_h$ is piecewise smooth, we have $\eta_{\pm}\in W^{1,\infty}(\Gamma)$.
Further in the proof we consider $\eta_+$. Same conclusions would be true for $\eta_-$.

Consider $\Omega_h$ as defined in \eqref{Omega_h}, then   $\eta_+(\bx)$ is an implicit function given by
\begin{equation}\label{implicit}
\phi_h(\bx+\eta_+(\bx)\bn(\bx))=d,\quad \bx\in\Gamma.
\end{equation}
For the distance function it holds $\phi(\bx+\alpha\bn(\bx))=\alpha$, for $\bx\in\Gamma$ and $\alpha\in[-d,d]$. Hence  from \eqref{implicit} and \eqref{phi_h} we conclude
\begin{equation}\label{Est1}
|\eta_+(\bx)-d|=|\phi(\bx+\eta_+(\bx)\bn(\bx))-\phi_h(\bx+\eta_+(\bx)\bn(\bx))|\le c\,h^{q+1},\quad
\bx\in\Gamma.
\end{equation}
To compute the surface gradient of $\eta_+$,  we differentiate    \eqref{implicit} and find using the chain rule:
\[
\unabla \eta_+(\bx) = -\frac{(I+\eta_+(\bx)\bH(\bx))\unabla \phi_h(\bx')}{\nabla \phi_h(\bx')\cdot \nabla\phi(\bx)},\quad
\bx'=\bx+\bn\eta_+,\quad \bx\in\Gamma.
\]
Noting $\unabla \phi=0$, $\nabla\phi(\bx)=\nabla\phi(\bx')$, and using \eqref{phi_h} we estimate  for sufficiently small mesh size $h$
\[
\begin{split}
|\unabla \eta_+(\bx)|&=\left|\frac{(I+\eta_+(\bx)\bH(\bx))\unabla ( \phi_h(\bx')-\phi(\bx'))}
{\nabla \phi_h(\bx')\cdot \nabla\phi(\bx')}\right|\\
&=2\left|\frac{(I+\eta_+(\bx)\bH(\bx))\unabla ( \phi_h(\bx')-\phi(\bx'))}
{|\nabla \phi_h(\bx')|^2+|\nabla\phi(\bx')|^2-|\nabla (\phi_h(\bx')-\phi(\bx'))|^2}\right|\\
&\le
c\left|\frac{(I+\eta_+(\bx)\bH(\bx))h^q }
{|\nabla \phi_h(\bx')|^2+1-ch^{2q}}\right|
 \le ch^q \quad \mbox{a.e.~on}~\Gamma.
\end{split}
\]
From this and \eqref{Est1} we infer
\begin{equation}\label{eta_h}
\|\eta_+-d\|_{L^\infty(\Gamma)}+ h \|\unabla\eta_+\|_{L^\infty(\Gamma)}\le c\,h^2.
\end{equation}

Now the required mapping can be defined as
\[
\Phi_h(\bx)=\left\{\begin{aligned}
\bx-\bn(\bx)\frac{\left(\eta_+^e(\bx)-d\right)\phi(\bx)}{\eta_+^e(\bx)}&~~\text{if}~~ \phi(\bx)\ge0\\
\bx-\bn(\bx)\frac{\left(\eta_-^e(\bx)+d\right)\phi(\bx)}{\eta_-^e(\bx)}&~~\text{if}~~ \phi(\bx)<0\\
\end{aligned}
\right.\quad\bx\in\Omega_h.
\]
The property  $\bp(\Phi_h(\bx))=\bp(\bx)$  is obviously satisfied by the construction of $\Phi_h$.
Due to  the triangle inequality and \eqref{Est1} we have
\[
d\le|\eta_+(\bx)|+ |d-\eta_+(\bx)|\le |\eta_+(\bx)| + ch^{q+1}\le |\eta_+(\bx)| + cdh^{q}\quad
\bx\in\Gamma
\]
Therefore, for sufficiently small $h$ there exists a mesh independent constant $c>0$ such that $|\eta_+^e(\bx)|\ge cd\ge c |\phi(\bx)|$.
Hence the estimate for $|\bx-\Phi_h(\bx)|$  follows from  \eqref{eta_h}.
The estimate for $\|\bI-\mathrm{D}\Phi_h\|_2$ also follows from \eqref{eta_h} with the help of \eqref{nabla1}.\\
\end{proof}


\begin{lemma}\label{L_AB}
For two symmetric positive definite  matrices $A,B\in\R^{N\times N}$, assume $\|A-B\|_2\le\delta$, where $\|\cdot\|_2$ denotes the spectral matrix norm. Then it holds
\begin{align}\label{AB1}
\|A^2-B^2\|_2&\le \delta \|A+B\|_2,\\
\|A^{-1}-B^{-1}\|_2&\le \delta \|B^{-1}\|_2\|A^{-1}\|_2, \label{AB2}\\
|\mbox{\rm det}(A)-\mbox{\rm det}(B)|&\le \delta N \max\{\|A\|_2^{N-1},\|B\|_2^{N-1}\}. \label{AB3}
\end{align}
\end{lemma}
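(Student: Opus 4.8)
The plan is to prove the three estimates separately: \eqref{AB1} and \eqref{AB2} are consequences of standard matrix identities, while \eqref{AB3} is the genuine content and rests on differentiating the determinant along the segment joining $B$ to $A$.

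For \eqref{AB1} I would start from the symmetrized factorization $A^2-B^2=\tfrac12\big[(A-B)(A+B)+(A+B)(A-B)\big]$, which one checks by expanding both products so that the cross terms $\pm(AB-BA)$ cancel. Taking spectral norms and using the triangle inequality together with submultiplicativity $\|XY\|_2\le\|X\|_2\|Y\|_2$ gives $\|A^2-B^2\|_2\le\|A-B\|_2\|A+B\|_2\le\delta\|A+B\|_2$. For \eqref{AB2} I would use the resolvent identity $A^{-1}-B^{-1}=-A^{-1}(A-B)B^{-1}$ (both inverses exist since $A,B$ are SPD) and apply submultiplicativity once more to obtain $\|A^{-1}-B^{-1}\|_2\le\|A^{-1}\|_2\,\|A-B\|_2\,\|B^{-1}\|_2\le\delta\|A^{-1}\|_2\|B^{-1}\|_2$.

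The estimate \eqref{AB3} is the main point. I would connect the two matrices by the segment $C(t):=B+t(A-B)$, $t\in[0,1]$. Since $A,B$ are SPD, each $C(t)=(1-t)B+tA$ is again SPD, and $\|C(t)\|_2\le(1-t)\|B\|_2+t\|A\|_2\le M$ with $M:=\max\{\|A\|_2,\|B\|_2\}$. Writing $\det(A)-\det(B)=\int_0^1\frac{\rd}{\rd t}\det(C(t))\,\rd t$ and invoking Jacobi's formula $\frac{\rd}{\rd t}\det(C(t))=\tr(\mathrm{adj}(C(t))(A-B))$, where $\mathrm{adj}$ denotes the adjugate, reduces matters to bounding this trace. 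Here I would exploit that for SPD $C$ the adjugate $\mathrm{adj}(C)=\det(C)\,C^{-1}$ is symmetric positive semidefinite, so that for any symmetric $S$ with $\|S\|_2\le\delta$ the conjugation $\mathrm{adj}(C)^{1/2}S\,\mathrm{adj}(C)^{1/2}$ is squeezed between $\mp\delta\,\mathrm{adj}(C)$, giving $|\tr(\mathrm{adj}(C)S)|\le\delta\,\tr(\mathrm{adj}(C))$. Since the eigenvalues of $\mathrm{adj}(C)$ are the products $\prod_{j\neq i}\lambda_j$ of the eigenvalues $\lambda_1,\dots,\lambda_N$ of $C$, each of which is at most $\|C\|_2^{N-1}$, we get $\tr(\mathrm{adj}(C))\le N\|C\|_2^{N-1}\le NM^{N-1}$. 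Combining the two bounds yields $\big|\frac{\rd}{\rd t}\det(C(t))\big|\le\delta N M^{N-1}$ uniformly in $t$, and integrating over $[0,1]$ produces \eqref{AB3}.

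The main obstacle is the trace bound in the last step, and it is precisely where the positive-definiteness hypothesis is indispensable: it guarantees that the whole segment $C(t)$ stays SPD and hence that $\mathrm{adj}(C(t))$ remains positive semidefinite, which is what validates $|\tr(\mathrm{adj}(C)S)|\le\delta\,\tr(\mathrm{adj}(C))$. For merely symmetric (indefinite) matrices this monotonicity argument breaks down and the clean constant $\delta N M^{N-1}$ cannot be recovered this way.
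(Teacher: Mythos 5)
Your proof is correct, but all three estimates are obtained by routes different from the paper's. For \eqref{AB1} the paper does not factor $A^2-B^2$; it uses the characterization $\|M\|_2=\sup_{x\neq0}|\langle Mx,x\rangle|/|x|^2$ valid for symmetric $M$ together with the identity $\langle(A^2-B^2)x,x\rangle=\langle(A-B)x,(A+B)x\rangle$ and Cauchy--Schwarz, whereas your anticommutator identity $A^2-B^2=\tfrac12[(A-B)(A+B)+(A+B)(A-B)]$ plus submultiplicativity is equally valid and does not even use symmetry. For \eqref{AB2} the paper argues through the Loewner order ($A\le\delta I+B$ implies $B^{-1}\le(1+\delta\lambda_{\min}^{-1}(B))A^{-1}$, and symmetrically), while your resolvent identity $A^{-1}-B^{-1}=-A^{-1}(A-B)B^{-1}$ gives the bound in one line and needs only invertibility. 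The real divergence is in \eqref{AB3}: the paper telescopes the product of eigenvalues to get $|\det(A)-\det(B)|\le N\max_k|\lambda_k(A)-\lambda_k(B)|\max\{\lambda_N^{N-1}(A),\lambda_N^{N-1}(B)\}$ and then controls each eigenvalue difference by $\delta$ via the Courant--Fischer min-max characterization (Weyl's perturbation inequality); you instead integrate Jacobi's formula along the segment $C(t)=(1-t)B+tA$ and bound $|\tr(\mathrm{adj}(C)S)|\le\delta\,\tr(\mathrm{adj}(C))\le\delta N\|C\|_2^{N-1}$ using positive semidefiniteness of the adjugate. Both arguments are complete; the paper's eigenvalue route avoids calculus and in fact survives for indefinite symmetric matrices (with $\lambda_N$ replaced by the spectral norm), whereas your squeeze on the adjugate genuinely needs the whole segment to stay in the SPD cone --- which convexity of that cone guarantees here, so your use of the hypothesis is legitimate, just less robust.
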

\begin{proof}
For completeness, we give the proof of  these elementary results.
For a symmetric matrix $A\in\R^{N\times N}$ we have  $\|A\|_2=\sup\limits_{0\neq x\in\R^3}|\la Ax,x\ra|/|x|^2$.
Hence the estimate \eqref{AB1} follows from
\begin{multline*}
|\la(A^2-B^2)x,x\ra|=|\la(A-B)x,(A+B)x\ra|\\ \le |(A-B)x| |(A+B)x|\le \|A-B\|_2 \|A+B\|_2|x|^2.
\end{multline*}
We write $A\le B$ if the matrix $B-A$ is positive semidefinite and recall that for two  symmetric  positive definite
matrices $A\le B$ yields $B^{-1}\le A^{-1}$. Using this and that
$\|A-B\|_2\le\delta$ is equivalent to $ -\delta I\le A-B\le \delta I$ we obtain
\[
A\le \delta I +B\quad\Rightarrow\quad A\le (\delta\lambda_{\min}^{-1}(B)+1)B\quad\Leftrightarrow\quad
B^{-1}\le (\delta\lambda_{\min}^{-1}(B)+1)A^{-1}
\]
This implies
\[B^{-1}-A^{-1}\le \delta\lambda_{\min}^{-1}(B)A^{-1}\le \delta\lambda_{\min}^{-1}(B)\lambda_{\max}(A^{-1})I=
\delta \|B^{-1}\|_2\|A^{-1}\|_2I
\]
Same arguments show $ A^{-1}-B^{-1}\le  \delta \|B^{-1}\|_2\|A^{-1}\|_2I$.

To prove \eqref{AB3}, note that $\mbox{det}(A)=\prod_{k=1}^N\lambda_k(A)$ for eigenvalues $0<\lambda_1(A)\le\dots\le\lambda_N(A)$. Hence
\[
|\mbox{det}(A)-\mbox{det}(B)|\le N \max\limits_{k=1,\dots,N}|\lambda_k(A)-\lambda_k(B)|\max\{\lambda_N^{N-1}(A),\lambda_N^{N-1}(B)\}.
\]
The Courant--Fischer theorem gives for the $k$th eigenvalue of
a symmetric matrix the characterization
\[
\lambda_k(A)=\max_{S\in{{\mathcal{V}}_{k-1}}} \min_{0\neq y\in S^\perp}
\frac{\langle A y , y \rangle}{|y|^2}\,,
\]
where $\mathcal{V}_{k-1}$ denotes the family of all $(k-1)$-dimensional
subspaces of $\R^{N}$.
The inequality $\min_y(a(y)+b(y))\le\min_ya(y)+\max_yb(y)$ implies that $\min_ya(y)-\min_yb(y) \leq
\max_y(a(y)-b(y))$.  Using this we estimate
\[
 \lambda_k(A)-\lambda_k(B) \le
\max_{S\in{\mathcal{V}_{k-1}}} \max_{y\in S^\perp}
\frac{\langle(A-B) y ,y\rangle}{|y|^2} \le
\max_{y\in\mathbb{R}^{N}}\frac{\langle (A-B) y ,y\rangle}{|y|^2} \le
\|{A}-B\|_2\le\delta.
\]
One can estimate the difference $\lambda_k(B)-\lambda_k(A)$ in the same way.\\
\end{proof}

Now we are prepared for the error analysis of our finite element method.
First we prove an estimate for the error in a volume norm.

\begin{theorem}\label{Conv1} Let $\Gamma\in C^3$. Assume $u^e$ and $u_h$ solve
problems \eqref{ExtNew} and \eqref{FEmeth}, respectively, and $u^e\in W^{1,\infty}(\Omega_d)$, 
$f\in L^\infty(\Gamma)$.
Then it holds
\begin{equation}\label{err_estH1}
\|u^e-u_h\|_{H^1(\Omega_h)} \le C \left(\inf_{v_h\in V_h}\|u^e-v_h\|_{H^1(\Omega_h)}+ d^{\frac12}h^q+d^\frac32\,h^p\right),
\end{equation}
where  $q$ and $p$ are defined in \eqref{phi_h} and \eqref{Hh}, respectively.
\end{theorem}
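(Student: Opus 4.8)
The plan is to prove this via the standard Strang-lemma paradigm for nonconforming/variational-crime settings, accounting for three distinct sources of error: the Galerkin approximation error, the geometric error from using $\phi_h, \bH_h$ in place of $\phi, \bH$, and the domain mismatch between $\Omega_h$ and $\Omega_d$. First I would exploit coercivity of $a_h$ (established at the end of Section~\ref{s_FEM}): for any $v_h \in V_h$, writing $e_h = u_h - v_h$, the ellipticity gives $\|u_h - v_h\|_{H^1(\Omega_h)}^2 \le C\, a_h(u_h - v_h, e_h)$. The idea is to expand $a_h(u_h, e_h)$ using the discrete equation \eqref{FEmeth} (so it equals $\int_{\Omega_h} f^e e_h\,\mu_h\,d\bx$) and then compare the resulting expression against the continuous weak form \eqref{weak_bulk} satisfied by $u^e$ on $\Omega_d$. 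Adding and subtracting, I split $a_h(u_h, e_h) - a_h(v_h, e_h)$ into a term measuring how far $v_h$ is from $u^e$ in the exact form, plus consistency terms measuring the discrepancy between $a_h$ and $a$ and between the two domains of integration. Choosing $v_h$ to be the best approximant then produces the $\inf_{v_h}$ term after a triangle inequality.

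The heart of the argument is bounding the consistency error
\[
\sup_{0 \ne w \in V_h} \frac{|a_h(u^e, w) - a(u^e, w) - (\text{data terms})|}{\|w\|_{H^1(\Omega_h)}},
\]
which I would decompose into a \emph{coefficient error} and a \emph{domain error}. For the coefficient error I compare the integrands $(\bI - \phi_h \bH_h)^{-2}\mu_h$ and $(\bI - \phi \bH)^{-2}\mu$ pointwise on $\Omega_h$. This is exactly what Lemma~\ref{L_AB} is built for: with $A = \bI - \phi\bH$ and $B = \bI - \phi_h\bH_h$, the assumption \eqref{phi_h} controls $\|\phi - \phi_h\|_\infty \le ch^{q+1}$ while \eqref{Hh} controls $\|\bH - \bH_h\|_{L^2(\Omega_h)} \le c|\Omega_h|^{1/2}h^p$, so $\|A - B\|_2 \lesssim h^{q+1} + (\text{pointwise }\bH\text{ difference})$. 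Feeding these through \eqref{AB1}--\eqref{AB3}, together with the uniform spectral bounds \eqref{spec}, yields bounds on $\|(\bI-\phi\bH)^{-2} - (\bI-\phi_h\bH_h)^{-2}\|_2$ and $|\mu - \mu_h|$. Since $|\Omega_h| \sim d$ in a narrow band and the Hessian difference enters only in $L^2$, integrating the coefficient error against $\nabla u^e \cdot \nabla w$ and using $u^e \in W^{1,\infty}$ produces contributions scaling like $d^{1/2}h^q$ (from the $\phi$-error and boundary mismatch, which carry a factor $\sqrt{d}$ from the measure of $\Omega_h$) and $d^{3/2}h^p$ (from the Hessian error, where the $L^2$ bound \eqref{Hh} already supplies one factor $|\Omega_h|^{1/2}\sim d^{1/2}$, times an additional $d$).

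For the domain error I use Lemma~\ref{L_Phi}: the bi-Lipschitz map $\Phi_h : \Omega_h \to \Omega_d$ with $\|\id - \Phi_h\|_\infty + h\|\bI - \mathrm{D}\Phi_h\|_\infty \le ch^{q+1}$ lets me pull integrals over $\Omega_d$ back to $\Omega_h$ (or estimate the symmetric-difference region $\Omega_h \triangle \Omega_d$, which has measure $O(h^{q+1})$ times surface area). Because $u^e$ is constant along normals (Theorem~\ref{Th0}(iii)) and $\bp \circ \Phi_h = \bp$, the transported integrand of $u^e$ is essentially unchanged, so the domain error reduces to the Jacobian discrepancy $\|\bI - \mathrm{D}\Phi_h\|_\infty \le ch^q$ integrated over the band, again giving a $d^{1/2}h^q$-type bound via the $W^{1,\infty}$ regularity of $u^e$ and $f \in L^\infty$. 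Collecting the Galerkin term, the coefficient error, and the domain error, and dividing through by $\|e_h\|_{H^1(\Omega_h)}$, yields \eqref{err_estH1} after one final triangle inequality $\|u^e - u_h\| \le \|u^e - v_h\| + \|v_h - u_h\|$. \textbf{The main obstacle} I anticipate is the careful bookkeeping of the powers of $d$: one must track exactly how each error source picks up factors of $d^{1/2}$ (from $|\Omega_h|^{1/2}$ in the energy norm over a thin band) versus $d^{3/2}$ (for the Hessian term), since getting these wrong would spoil the optimality claim for narrow bands $d = O(h)$. The subtlety is that the $L^2$-only control of $\bH - \bH_h$ in \eqref{Hh}, rather than $L^\infty$, forces the Hessian contribution through a Cauchy--Schwarz step that couples it to the $L^\infty$ bound on $\nabla u^e$, and correctly attributing the surviving $d$-powers there is where the analysis must be most careful.
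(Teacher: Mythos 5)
Your proposal follows essentially the same route as the paper: the paper invokes the second Strang lemma (which is exactly the coercivity-plus-consistency argument you spell out), splits the consistency error into a coefficient discrepancy handled pointwise via Lemma~\ref{L_AB} with the spectral bounds \eqref{spec} and the $W^{1,\infty}$ regularity of $u^e$, and a domain discrepancy handled by pulling the test function back through the map $\Phi_h$ of Lemma~\ref{L_Phi} using $\bp\circ\Phi_h=\bp$ and the normal-constancy of $u^e$. Your bookkeeping of the $d^{1/2}h^q$ and $d^{3/2}h^p$ powers also matches the paper's estimates \eqref{Th1_aux3}, \eqref{Th1_aux8}, and \eqref{Th1_aux9}.
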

\begin{proof} Since $u^e$ is constant along normals, we can consider a normal extension of $u^e$ on $\Omega_{d'}$. Then the bilinear form $a_h(u^e,v_h)$ is well defined and we can apply the second Strang's lemma. Hence, to show \eqref{err_estH1}, we need to check the bound
\begin{equation}\label{Th1_aux1}
\frac{|a_h(u^e,v_h)-\int_{\Omega_h}f^ev_h\mu_h|}{\|v_h\|_{H^1(\Omega_h)}}\le
 C\,(d^{\frac12}h^q+d^\frac32h^p).
\end{equation}
We introduce the auxiliary bilinear form
 \[
 a^h(u,v):=\int_{\Omega_h}\left[(\bI-\phi\bH)^{-2}\nabla u\cdot \nabla v + \alpha^e\,uv\right]\,\mu d\bx,\quad \mbox{for}~u,v\in H^1(\Omega_h).
 \]
It holds
 \begin{equation}\label{Th1_aux2}
 \begin{split}
|a^h(u^e,&v_h)-a_h(u^e,v_h)|  \\
&\le \|u^e\|_{W^{1,\infty}(\Omega_h)}\|v_h\|_{H^1(\Omega_h)}\\
&\qquad\times\left(\int_{\Omega_h}\|\mu(\bI-\phi\bH)^{-2}
-\mu_h(\bI-\phi_h\bH_h)^{-2}\|^2_2 + |\alpha^e| |\mu-\mu_h|^2 d\bx\right)^{\frac12}.
\end{split}
\end{equation}
Recall that matrices $(\bI-\phi\bH)$ and $(\bI-\phi_h\bH_h)$ are symmetric positive definite
and $\mu=\mbox{det}(\bI-\phi\bH)$,  $\mu_h=\mbox{det}(\bI-\phi_h\bH_h)$. Since $\phi,\bH$ and $\phi_h, \bH_h$
both satisfy \eqref{ass1}, for the spectrum and determinants of $(\bI-\phi\bH)$ and $(\bI-\phi_h\bH_h)$ the bounds in \eqref{spec} hold
uniformly in $\bx$ and $h$.  We use this and Lemma~\ref{L_AB}  to estimate
\[
\begin{aligned}
\|\mu(\bI-\phi\bH)^{-2}&
-\mu_h(\bI-\phi_h\bH_h)^{-2}\|_2+  |\alpha^e||\mu-\mu_h|\\ &\le C\,\|\phi\bH-\phi_h\bH_h\|_2
\le C(|\phi|\|\bH-\bH_h\|_2+\|\bH_h\|_2|\phi-\phi_h|)\\ &\le C(d\,\|\bH-\bH_h\|_2+h^{q+1}).
\end{aligned}
\]
Applying \eqref{Hh} we get from \eqref{Th1_aux2}
 \begin{equation}\label{Th1_aux3}
\begin{aligned}
|a^h(u^e,v_h)-a_h(u^e,v_h)|  &\le C|\Omega_h|^{\frac12}(d\,h^p+h^{q+1})\|u^e\|_{W^{1,\infty}(\Omega_h)}\|\nabla v_h\|_{L^2(\Omega_h)}\\
&\le C\,d^{\frac12}\,(d h^p+h^{q+1})\|v_h\|_{H^1(\Omega_h)}.
\end{aligned}
\end{equation}

It remains to  estimate $|a^h(u^e,v_h)-\int_{\Omega_h}f^ev_h\mu_h|$.
Following \cite{DDEH}  we consider $v_h\circ\Phi_h^{-1}\in H^1(\Omega_d)$ as a test function in \eqref{weak}. By the triangle inequality we have
\begin{multline}\label{Th1_aux4}
|a^h(u^e,v_h)-\int_{\Omega_h}f^ev_h\mu_h|\\ \le |a^h(u^e,v_h)-a(u^e,v_h\circ\Phi_h^{-1})|+\left|\int_{\Omega_d}f^ev_h\circ\Phi_h^{-1}\mu-\int_{\Omega_h}f^ev_h\mu_h\right|.
\end{multline}
Using the integrals transformation rule and the identities
\[
\begin{aligned}
\nabla(v_h\circ\Phi_h^{-1})&= (\mathrm{D}\Phi_h)^{-T}(\nabla v_h)\circ\Phi_h^{-1},\\
(\nabla u^e)\circ\Phi_h &= (\mathrm{D}\Phi_h)^{-T}\nabla(u^e\circ\Phi_h)=(\mathrm{D}\Phi_h)^{-T}\nabla u^e,\\
\alpha^e\circ\Phi_h&=\alpha^e,\quad u^e\circ\Phi_h=u^e,
\end{aligned}
\]
 we calculate
\begin{multline*}
a(u^e,v_h\circ\Phi_h^{-1})=\int_{\Omega_d}\left[(\bI-\phi\bH)^{-2}\nabla u^e\cdot \nabla(v_h\circ\Phi_h^{-1}) + \alpha^e\,u^e(v_h\circ\Phi_h^{-1})\right]\,\mu d\bx\\
=\int_{\Omega_h}\left[(\bI-\phi\bH)^{-2}\circ\Phi_h \left((\mathrm{D}\Phi_h)^{-T} \nabla u^e\right) \cdot \left((\mathrm{D}\Phi_h)^{-T}\nabla v_h\right)\right.\\\left. + \alpha^e\,u^e v_h\right]|\mbox{det}(\mathrm{D}\Phi_h)|\mu\circ\Phi_h d\bx.
\end{multline*}
Thus, we have
\begin{equation}\label{Th1_aux5}
a(u^e,v_h\circ\Phi_h^{-1})-a^h(u^e,v_h)=\int_{\Omega_h} R_1\nabla u^e \cdot\nabla v_h+R_2 u^e v_h\,d\bx,
\end{equation}
with
\begin{equation}\label{Th1_aux6}
\begin{split}
\|R_1\|_2&
=\|(\mathrm{D}\Phi_h)^{-1} \mu(\bI-\phi\bH)^{-2}\circ\Phi_h(\mathrm{D}\Phi_h)^{-T}|\mbox{det}(\mathrm{D}\Phi_h)|-
\mu(\bI-\phi\bH)^{-2}\|_2\\
&\le C\|\mu(\bI-\phi\bH)^{-2}-\mu(\bI-\phi\bH)^{-2}\circ\Phi_h\|_2
\\ &\qquad+
\|(\mathrm{D}\Phi_h)^{-1}\mu(\bI-\phi\bH)^{-2}(\mathrm{D}\Phi_h)^{-T}|\mbox{det}(\mathrm{D}\Phi_h)|-\mu(\bI-\phi\bH)^{-2}\|_2\\
&\le C\,\sup_{\Omega_{d'}}\|\nabla(\mu(\bI-\phi\bH)^{-2})\|_F\|\mbox{id}-\Phi_h\|_2
\\ &\qquad+
C\,\|\mu(\bI-\phi\bH)^{-2}\|_2
\|(\mathrm{D}\Phi_h)^{-1}-\bI\|_2\, |\mbox{det}(\mathrm{D}\Phi_h)|-1|\\
 &\le c h^q.
\end{split}
\end{equation}
and
\begin{equation}\label{Th1_aux7}
|R_2|=|\alpha^e(|\mbox{det}(\mathrm{D}\Phi_h)|\mu\circ\Phi_h-\mu)|\le ch^q\|\alpha^e\|_{L^\infty(\Omega_h)}\le c\,h^q\|\alpha\|_{L^\infty(\Gamma)}\le c\,h^q.
\end{equation}
The notion $\|\cdot\|_F$ was used above for the Frobenius norm of a tensor. The term $\|\nabla(\mu(\bI-\phi\bH)^{-2})\|_F$ is uniformly bounded on $\Omega_{d'}$ thanks to the
assumption $\Gamma\in C^3 \Rightarrow  \phi\in C^3(\Omega_{d'})\Rightarrow \mu, \bH \in C^1(\Omega_{d'})$.
From \eqref{Th1_aux5}--\eqref{Th1_aux7} we obtain
\begin{equation}\label{Th1_aux8}
\begin{split}
|a(u^e,v_h\circ\Phi_h^{-1})-a^h(u^e,v_h)|&\le c\,h^q |\Omega_h|^{\frac12}\|u^e\|_{W^{1,\infty}(\Omega_h)}\|v_h\|_{H^1(\Omega_h)}\\
&\le c\,d^{\frac12}\,h^q\|v_h\|_{H^1(\Omega_h)}.
\end{split}
\end{equation}

Since $f^e\circ\Phi_h=f^e$, we also have
\[
\int_{\Omega_d}f^ev_h\circ\Phi_h^{-1}\mu\,d\bx=\int_{\Omega_h}f^ev_h|\mbox{det}(\mathrm{D}\Phi_h)|\mu\circ\Phi_hd\bx.
\]
Hence
\begin{equation*}
\begin{split}
\left|\int_{\Omega_d}f^ev_h\circ\Phi_h^{-1}\mu-\int_{\Omega_h}f^ev_h\mu_h\right|& \le c(h^q \|f^e\|_{L^2(\Omega_h)}+h^pd^{\frac32}\|f^e\|_{L^\infty(\Omega_h)})\|v_h\|_{L^2(\Omega_h)}\\
&\le c(h^q \|f^e\|_{L^2(\Omega_h)}+h^pd^{\frac32}\|f^e\|_{L^\infty(\Omega_h)})\|v_h\|_{L^2(\Omega_h)}.
\end{split}
\end{equation*}
Since for the normal extension it holds $\|f^e\|_{L^2(\Omega_{d'})}\le c(d')^{\frac12}\|f\|_{L^\infty(\Gamma)}$, and $d'\le c\,d$, we obtain
\begin{equation}\label{Th1_aux9}
\left|\int_{\Omega_d}f^ev_h\circ\Phi_h^{-1}\mu-\int_{\Omega_h}f^ev_h\mu_h\right|
\le c(d^{\frac12}\,h^q+d^{\frac32}\,h^p)\|v_h\|_{L^2(\Omega_h)}.
\end{equation}
Combining estimates in \eqref{Th1_aux3}, \eqref{Th1_aux8}, \eqref{Th1_aux9} we prove \eqref{Th1_aux1}.\\
\end{proof}

\begin{remark}\rm Note that the extra regularity assumption $u^e\in W^{1,\infty}(\Omega_d)$ was only used
to estimate the consistency error in \eqref{Th1_aux2} due to the Hessian approximation and \eqref{Th1_aux8}.
If we alternatively assume the Hessian $O(h^p)$ approximation order in the stronger norm $L^\infty(\Omega_h)$, then
it is sufficient to let $u\in H^{1}(\Gamma)$ and employ the estimate
$\|u^e\|_{H^1(\Omega_{d'})}\le c(d')^{\frac12}\|u\|_{H^1(\Gamma)}$ in \eqref{Th1_aux2}  and \eqref{Th1_aux8}.
The same remark is valid for the statement of Theorem~\ref{ThMain} below.
\end{remark}

Now we turn to proving the error estimate in the surface $H^1$-norm.
The result of the lemma below follows from Lemma~3 in \cite{Hansbo}, see also Lemma~4.4 in \cite{ChOlsh}.

\begin{lemma}\label{L_trace}
Let $T\in \T_h$. Denote  $\widetilde{K}=T\cap\Gamma$, then for any $v\in H^1(T)$ it holds
\begin{equation}\label{eq_trace}
\|v\|_{L^2(\widetilde{K})}^2\le C\,(h^{-1}\|v\|_{L^2(T)}^2+h\|\nabla v\|_{L^2(T)}^2),
\end{equation}
where the constant $C$ may depend only on $\Gamma$ and the minimal angle condition for $\T_h$.
\end{lemma}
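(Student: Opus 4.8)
The plan is to reduce the trace on the interior interface $\widetilde K=T\cap\Gamma$ to a bulk integral over $T$ together with an ordinary trace on the \emph{flat} faces of $T$, by applying the divergence theorem to the vector field $\bn=\nabla\phi$. The point of this reduction is that $\widetilde K$ disappears from the right-hand side, so the resulting constant cannot degenerate with the size or shape of the cut and depends only on the curvature of $\Gamma$ and the shape regularity of $\T_h$. By density it is enough to treat $v\in C^\infty(\overline T)$. Since $\widetilde K\neq\emptyset$ and $\mbox{diam}(T)\le h$, for $h$ small we have $T\subset\Omega_{d'}$, so $\phi$ is $C^2$ on $T$, $\bn$ is a $C^1$ unit field, and $\div\bn=\tr(\bH)$ is uniformly bounded by \eqref{curv}--\eqref{ass1}. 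Writing $T^-=T\cap\{\phi<0\}$, the surface $\widetilde K$ is the interface between $T^-$ and $T^+$, and the outward unit normal of $T^-$ on $\widetilde K$ is $\bn$, whence $\bn\cdot\bn=1$ there.

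First I would apply the divergence theorem to $v^2\bn$ on $T^-$. Denoting by $\Sigma=\partial T^-\cap\partial T$ the portion of the boundary lying on the faces of $T$, this yields
\begin{equation*}
\int_{\widetilde K}v^2\,d\bs=\int_{T^-}\bigl(v^2\,\div\bn+2\,v\,\bn\cdot\nabla v\bigr)\,d\bx-\int_{\Sigma}v^2\,(\bn\cdot\bn_{\partial T})\,d\bs.
\end{equation*}
The bulk term I would bound with the uniform bound on $\div\bn$, the Cauchy--Schwarz inequality, and Young's inequality with weight $h$; using $|T^-|\le|T|$ and that $h$ is bounded above to absorb the zeroth-order contribution into the $h^{-1}$ term, this gives
\begin{equation*}
\left|\int_{T^-}\bigl(v^2\,\div\bn+2\,v\,\bn\cdot\nabla v\bigr)\,d\bx\right|\le C\,h^{-1}\|v\|_{L^2(T)}^2+C\,h\,\|\nabla v\|_{L^2(T)}^2.
\end{equation*}

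It then remains to control the flat-face term. Since $|\bn\cdot\bn_{\partial T}|\le1$ and $\Sigma$ is contained in the union of the (at most four) faces $F$ of $T$, I would invoke the classical scaled trace inequality on a shape-regular simplex,
\begin{equation*}
\|v\|_{L^2(F)}^2\le C\bigl(h^{-1}\|v\|_{L^2(T)}^2+h\,\|\nabla v\|_{L^2(T)}^2\bigr),
\end{equation*}
proved by mapping $T$ affinely to a fixed reference tetrahedron, using the trace theorem there, and rescaling; here the constant depends only on the minimal-angle condition, which is precisely how shape regularity enters the statement. Summing over the faces and combining with the bulk bound proves \eqref{eq_trace}. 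The main thing to be careful about is the non-degeneracy of the constant as the cut $\widetilde K$ becomes small: a naive scaling argument carried out directly on $\widetilde K$ would produce a constant depending on the intersection geometry, and the role of the divergence-theorem reduction above is exactly to circumvent this difficulty.
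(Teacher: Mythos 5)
Your proof is correct, and it is worth noting that the paper does not prove this lemma at all: it simply cites Lemma~3 of Hansbo--Hansbo and Lemma~4.4 of Chernyshenko--Olshanskii. Those references establish the trace inequality by locally parametrizing the cut $\widetilde K$ and running a one-dimensional fundamental-theorem-of-calculus argument along directions transversal to the interface, which requires some case analysis over how $\Gamma$ can intersect a simplex. Your route via the divergence theorem applied to $v^2\bn$ on $T^-=T\cap\{\phi<0\}$ is a clean, self-contained alternative: the identity $\bn\cdot\bn=1$ on $\widetilde K$ makes the cut surface appear with a favorable sign and unit weight, the bulk remainder is controlled by the uniform bound on $\div\bn=\tr\bH=\kappa_1+\kappa_2$ (available from \eqref{curv} and \eqref{ass_d} once $T\subset\Omega_{d'}$), and the only trace inequality actually invoked is the standard scaled one on the flat faces of a shape-regular simplex, whose constant visibly depends only on the minimal-angle condition. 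This is exactly the right mechanism for making the constant independent of how small or sliver-like $\widetilde K$ is, which is the whole content of the lemma. The one point you should flag explicitly is the applicability of the Gauss--Green formula on the cut cell $T^-$: since $\Gamma$ is a $C^2$ level set of a function with $|\nabla\phi|=1$ and $T$ is a polytope, $T^-$ is a set of finite perimeter whose reduced boundary is $\widetilde K\cup\Sigma$ up to $\mathcal H^2$-null sets, so the formula holds for the $C^1$ field $v^2\bn$; degenerate configurations (e.g.\ $\Gamma$ tangent to a face) are absorbed by this measure-theoretic formulation. With that remark added, your argument is complete and, arguably, more transparent than the cited proofs.
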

\smallskip

Now we prove our main result concerning the convergence of the finite element method \eqref{FEmeth}.

\begin{theorem}\label{ThMain} Let $\Gamma\in C^{r+2}$, $d\le c\,h$, $f\in L^\infty(\Gamma)$, and assume $u\in W^{1,\infty}(\Gamma)\cap H^{r+1}(\Gamma)$ solves
the surface problems \eqref{LBeq} and $u_h\in V_h$ solves \eqref{FEmeth}. Then it holds
\[
\|u-u_h\|_{H^1(\Gamma)} \le C\,(h^r+h^{p+1}+h^q),
\]
where a constant $C$ is independent of $h$, and $r\ge1$, $p\ge0$, $q\ge1$ are the finite elements, Hessian recovery,
 and distance function approximation orders defined in \eqref{FEspace}, \eqref{phi_h} and \eqref{Hh}, respectively.
\end{theorem}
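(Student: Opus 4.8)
The plan is to transfer the bulk estimate of Theorem~\ref{Conv1} to the surface through the trace inequality of Lemma~\ref{L_trace}, using the fact that the band width $d=O(h)$ exactly cancels the $h^{-1/2}$ factor that trace introduces. By Theorem~\ref{Th0}(ii)--(iii) the normal extension $u^e$ solves \eqref{ExtNew} and $u=u^e|_\Gamma$, so on $\Gamma$ one has $u-u_h=(u^e-u_h)|_\Gamma$ and $\unabla(u-u_h)=\bP\nabla(u^e-u_h)|_\Gamma$, giving the pointwise bound $|\unabla(u-u_h)|\le|\nabla(u^e-u_h)|$ on $\Gamma$. Writing $w=u^e-u_h$, I would thus bound $\|w\|_{L^2(\Gamma)}$ and $\|\nabla w\|_{L^2(\Gamma)}$ separately, summing Lemma~\ref{L_trace} over the tetrahedra that meet $\Gamma$.

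For the $L^2(\Gamma)$ part, Lemma~\ref{L_trace} applied directly to $w$ gives $\|w\|_{L^2(\Gamma)}^2\le C(h^{-1}\|w\|_{L^2}^2+h\|\nabla w\|_{L^2}^2)$ over the band, so $\|w\|_{L^2(\Gamma)}\le Ch^{-1/2}\|w\|_{H^1}$; no second derivatives appear, so this is immediate from the bulk estimate. For the gradient, applying Lemma~\ref{L_trace} componentwise to $\partial_j w$ produces a term $h\|D^2w\|_{L^2}^2$, and since $D^2u_h\neq0$ once $r\ge2$ I would split $w=(u^e-I_hu^e)+(I_hu^e-u_h)$ with $I_hu^e\in V_h$. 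On the smooth part the trace estimate together with the interpolation bounds $\|D^j(u^e-I_hu^e)\|_{L^2}\le Ch^{r+1-j}\|u^e\|_{H^{r+1}}$ controls the $h\|D^2\cdot\|^2$ term; on the discrete part $v_h=I_hu^e-u_h\in V_h$ the second-derivative term is absorbed by the inverse inequality $\|D^2v_h\|_{L^2(T)}\le Ch^{-1}\|\nabla v_h\|_{L^2(T)}$, yielding $\|\nabla v_h\|_{L^2(\Gamma)}\le Ch^{-1/2}\|\nabla v_h\|_{H^1}$ over the band, after which $\|v_h\|_{H^1}\le\|u^e-I_hu^e\|_{H^1}+\|u^e-u_h\|_{H^1(\Omega_h)}$.

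The quantitative engine is the band scaling of the normal extension: from \eqref{Dk} and the coarea formula \eqref{L3_aux1}, integrating over level sets gives $\|u^e\|_{H^{r+1}(\Omega_{d'})}\le Cd^{1/2}\|u\|_{H^{r+1}(\Gamma)}$, which needs $u^e\in H^{r+1}$ and hence the regularity $\Gamma\in C^{r+2}$ assumed in the statement. Consequently $\inf_{v_h}\|u^e-v_h\|_{H^1(\Omega_h)}\le Ch^r\|u^e\|_{H^{r+1}}\le Ch^{r+1/2}$ using $d\le ch$, and Theorem~\ref{Conv1} delivers $\|u^e-u_h\|_{H^1(\Omega_h)}\le C(h^{r+1/2}+h^{q+1/2}+h^{p+3/2})$. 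Feeding this into the two bounds above, the $h^{-1/2}$ from the trace/inverse step turns $h^{r+1/2},h^{q+1/2},h^{p+3/2}$ into $h^r,h^q,h^{p+1}$, while the purely interpolatory contributions are $O(h^{r+1})$ (absorbed) for $L^2$ and $O(h^r)$ for the gradient; summing proves $\|u-u_h\|_{H^1(\Gamma)}\le C(h^r+h^{p+1}+h^q)$.

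The hard part will be the geometric bookkeeping, not the estimates. Lemma~\ref{L_trace} lives on the union $\cO$ of tetrahedra crossed by $\Gamma$, which need not be contained in $\Omega_h$ since the unfitted boundary $\dO_h$ cuts through elements, whereas Theorem~\ref{Conv1} controls the error only on $\Omega_h$. I would resolve this through the inclusions $\Gamma\subset\Omega_h\subset\T_d\subset\Omega_{d'}$, so that every element meeting $\Gamma$ lies in $\T_d$, where $u_h$ is defined and $u^e$ extends; the interpolation parts of the split are then controlled on all of $\cO$, while for the finite element part $I_hu^e-u_h$ I would pass from $H^1(\cO)$ to $H^1(\Omega_h)$ by finite-dimensional norm equivalence on the cut elements (a fat-intersection argument). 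Tracking which constants depend on $d'/d$ and checking that the inverse and interpolation estimates survive on cut elements is where the care is needed.
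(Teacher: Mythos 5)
Your proposal is correct and follows essentially the same route as the paper: the trace inequality of Lemma~\ref{L_trace} applied to $\nabla(u^e-u_h)$, the split through the Lagrange interpolant with an inverse inequality on the discrete part, the band scaling $|u^e|_{H^{r+1}(\Omega_{d'})}\le C\,d^{1/2}\|u\|_{H^{r+1}(\Gamma)}$, and Theorem~\ref{Conv1} with $d=O(h)$ so that the $h^{-1/2}$ from the trace step cancels the $d^{1/2}$ gains. Your extra care about elements crossing $\Gamma$ not being contained in $\Omega_h$ addresses a point the paper passes over silently, but it does not change the argument.
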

\begin{proof} Since $\Gamma\in C^3$, the regularity $u\in W^{1,\infty}(\Gamma)\cup H^2(\Gamma)$ implies for the normal extension:  $u^e\in W^{1,\infty}(\Omega_d)\cup H^2(\Omega_d)$. Hence the assumptions of  Theorem~\ref{Conv1} are satisfied.

We apply estimate  \eqref{eq_trace} componentwise to $v=\nabla(u^e-u_h)$. This leads to the bound
\begin{equation}\label{Th2_aux1}
\|\nabla(u^e-u_h)\|_{L^2(\widetilde{K})}^2\le C\,(h^{-1}|u^e-u_h|_{H^1(T)}^2+h|u^e-u_h|_{H^2(T)}^2),
\end{equation}
Denote by $I_h u^e$ the Lagrange interpolant for $u^e$ on $\T_d\subset\Omega_{d'}$. Thanks to the inverse
inequality and approximation properties of finite elements we have
\[
\begin{split}
|u^e-u_h|_{H^2(T)}&\le |u^e-I_h u^e|_{H^2(T)} + |I_h u^e-u_h|_{H^2(T)}\\
 &\le C(h^{r-1}|u^e|_{H^{r+1}(T)}+h^{-1} |I_h u^e-u_h|_{H^1(T)})\\ &\le
C(h^{r-1}|u^e|_{H^{r+1}(T)}+h^{-1} (|u^e-I_h u^e|_{H^1(T)}+|u^e-u_h|_{H^1(T)}) \\
&\le
C(h^{r-1}|u^e|_{H^{r+1}(T)}+h^{-1} |u^e-u_h|_{H^1(T)}).
\end{split}
\]
Substituting this estimate to \eqref{Th2_aux1} and summing up over all elements from $\T_h$ with non-empty intersection with $\Gamma$ and using   $|\unabla(u-u_h)|=|\unabla(u^e-u_h)|\le|\nabla(u^e-u_h)|$ on $\Gamma$, we get
 \begin{equation*}
\begin{aligned}
\|\unabla(u-u_h)\|_{L^2(\Gamma)}^2&\le C\,\sum_{\scriptsize  \begin{array}{c} T\in\T_\Gamma\\T\cap\Gamma\neq\emptyset\end{array}}\left(\,h^{-1}\|\nabla(u^e-u_h)\|_{L^2(T)}^2
+h^{2r-1}|u^e|_{H^{r+1}(T)}^2\right)\\
&\le C\,(h^{-1}\|\nabla(u^e-u_h)\|_{L^2(\Omega_h)}^2+h^{2r-1}|u^e|_{H^{r+1}(\Omega_{d'})}^2).
\end{aligned}
\end{equation*}
To estimate the first term on the righthand side, we apply the volume error estimate from Theorem~\ref{Conv1}, a standard approximation result
for finite element functions from $V_h$ and recall $d=O(h)$.  This leads to
\begin{equation}
 \label{Th2_aux2}
\|\unabla(u-u_h)\|_{L^2(\Gamma)}^2\le  C\,(h^{2q}+h^{2p+2}+h^{2r-1}|u^e|_{H^{r+1}(\Omega_{d'})}^2).
\end{equation}
Finally, integrating \eqref{Dk} for $k=r+1$ over $\Omega_{d'}$ and repeating arguments of Lemma~3.2 in \cite{ORG09}
we find
\begin{equation}
 \label{Th2_aux3}
|u^e|_{H^{r+1}(\Omega_{d'})}^2\le C\,d'\,\|u\|_{H^{r+1}(\Gamma)}^2.
\end{equation}
Estimate \eqref{Th2_aux2}, \eqref{Th2_aux3} and $d'\le ch$ yield
\begin{equation*}
\|\unabla(u-u_h)\|_{L^2(\Gamma)}^2\le  C\,(h^{2q}+h^{2p+2}+h^{2r}).
\end{equation*}

To show an estimate for the surface $L^2$-norm of the error, we apply the estimate \eqref{eq_trace} for $v=u^e-u_h$
and proceed with similar arguments.
\end{proof}

\section{Numerical examples}\label{s_numer}
In this section, we present results of several numerical experiments. They illustrate
the performance of the method and the analysis of the paper. In all experiments the band width, $d=\gamma h$, is ruled by the parameter $\gamma$ and always stays proportional to the mesh width. Results of a few experiments with a fixed mesh-independent band  width and fitted meshes can be found in \cite{ChOlsh}.

If $\Omega_h$ is a polyhedral domain (the approximation order equals $q=1$ in \eqref{phi_h}), then the implementation of the method is straightforward and this is what we use in all numerical examples. In this case, already $P_1$ finite elements deliver optimal convergence results. The technical difficulty of using higher order approximations of $\Omega$ is the need to define a suitable numerical integration rule  over a part of tetrahedra $T\in\cT_d$ bounded by a zero level set of $\phi_h$, where $\phi_h$ is a polynomial of degree $\ge2$ on $T$. In this paper for higher order elements, we use an implementation where sufficiently many quadrature nodes are taken within each cut triangle to guarantee accurate enough integration.  Integrating over arbitrary  cut element in $O(1)$ (asymptotically optimal for $h\to0$ number of operation) is a non-standard task (see, i.g., a recent paper~\cite{Integr}) and
we address it in a separate paper, currently in preparation.

\medskip

\noindent{\it Experiment~1}.
We start with the example of the  Laplace--Beltrami problem \eqref{LBeq}
on a unit circle in $\mathbb{R}^2$ with a known solution so that we are able to calculate
the error between the continuous and discrete solutions. We set $\alpha=1$ and consider
\[
u(r,\phi)=\cos(5\phi)
\]
in polar coordinates, similar to the Example~5.1 from~\cite{DDEH}.

We perform a regular  uniform triangulation of $\Omega=(-2,2)^2$ and  $h=2^{-\ell}\times 10^{-1}$ denotes further a maximal
edge length of triangles for the refinement level $\ell$. Thus the grid is not aligned with $\dO_d$. We use piecewise affine continuous finite elements, $r=1$, and $\Omega_h$ is a polygonal approximation of $\Omega_d$ as described in \cite{BarrettElliott}, $q=1$. Convergence results in $H^1(\Gamma)$ and $L^2(\Gamma)$ norms
are shown in Tables~\ref{tab1a} and~\ref{tab1b} for the choices $\bH_h=\bH$ and $\bH_h=0$, respectively.
Error reduction in $H^1(\Gamma)$ perfectly confirms theoretical analysis. The optimal order $L^2$ error estimate was not covered by the theory. In experiments, we observe a somewhat less regular behaviour of $L^2(\Gamma)$ error for
the band width $d=h$. It becomes more regular if the bandwidth slightly growth, and for $d=5h$ we clearly see
the optimal second order of convergence. To compare results for two band widths in terms of accuracy versus computational costs, Table~\ref{tab1a} also shows the number of active degrees of freedom involved in computations in each case. Running experiments
with varying $\gamma$ (not shown), we concluded that taking any $\gamma\in [1,5]$ would be a reasonable choice, while increasing the band width further does not pay off in terms of  accuracy versus CPU time.
Results for  $\bH_h=0$ are very much similar to the 'exact' choice $\bH_h=\bH$. We note that this would not be the case if the band width $d$ is chosen to be $h$ independent.
In this case, setting $\bH_h=0$ leads to suboptimal convergence rates.

\begin{table}[ht]\small
\caption{Error norms and estimated convergence orders  in Experiment 1 with $\bH_h=\bH$.\label{tab1a}}
\begin{center}
\begin{tabular}{r|rrrrr|rrrrr}
\multicolumn{1}{c|}{} & \multicolumn{5}{c|}{$d=h$} & \multicolumn{5}{c}{$d=5h$} \\ \hline
 $\ell$    &\#d.o.f.& $L_2(\Gamma)$   &  & $H^1(\Gamma)$&  &\#d.o.f. & $L_2(\Gamma)$&  & $H^1(\Gamma)$&  \\ \hline
0 &   55 & 5.49e-2 &     & 1.25e-1&      &  138 & 6.19e-2&     & 1.27e+0& \\
1 &  107 & 1.35e-2 & 2.02& 6.22e-1& 1.01 &  274 & 1.51e-2& 2.03& 6.29e-1&  1.01\\
2 &  211 & 3.56e-3 & 1.92& 3.18e-1& 0.97 &  536 & 3.78e-3& 2.00& 3.20e-1&  0.98\\
3 &  411 & 8.88e-4 & 2.00& 1.60e-1& 0.99 & 1066 & 9.32e-4& 2.02& 1.60e-1&  1.00\\
4 &  834 & 2.34e-4 & 1.93& 7.95e-2& 1.00 & 2118 & 2.34e-4& 1.99& 7.98e-2&  1.00\\
5 & 1665 & 6.71e-5 & 1.80& 3.97e-2& 1.01 & 4251 & 5.88e-5& 2.00& 3.99e-2&  1.00\\
6 & 3329 & 1.27e-5 & 2.41& 1.98e-2& 1.00 & 8479 & 1.46e-5& 2.01& 1.99e-2&  1.00\\
7 & 6669 & 5.25e-6 & 1.27& 9.86e-3& 1.00 &16955 & 3.65e-6& 2.00& 9.90e-3&  1.01\\
8 &13319 & 2.82e-6 & 0.90& 4.89e-3& 1.00 &33911 & 8.82e-7& 2.05& 4.91e-3&  1.01\\
9 &26619 & 4.73e-7 & 2.57& 2.40e-3& 1.00 &67924 & 2.34e-7& 1.91& 2.41e-3&  1.03\\
10&53317 & 2.24e-7 & 1.08& 1.15e-3& 1.00 &135692& 5.41e-8& 2.11& 1.16e-3&  1.06\\
11&106630& 5.61e-8 & 1.99& 5.75e-4& 1.00 &271544& 1.34e-8& 2.02& 5.35e-4&  1.12\\ \hline
\end{tabular}
\end{center}
\end{table}

\begin{table}[ht]\small
\caption{Error norms and estimated convergence orders in Experiment 1 with $\bH_h=0$.\label{tab1b}}
\begin{center}
\begin{tabular}{r|rrrr|rrrr}
\multicolumn{1}{c|}{} & \multicolumn{4}{c|}{$d=h$} & \multicolumn{4}{c}{$d=5h$} \\ \hline
$\ell$ & $L_2(\Gamma)$   &  & $H^1(\Gamma)$&  & $L_2(\Gamma)$&  & $H^1(\Gamma)$&  \\ \hline
0  & 5.55e-2&      & 1.25e+0&      &2.90e-2&     &1.23e+0&\\
1  & 1.41e-2&  1.98& 6.22e-1& 1.00 &9.68e-3& 1.58&5.94e-1&  1.05\\
2  & 3.74e-3&  1.91& 3.18e-1& 0.97 &5.51e-3& 0.81&3.12e-1&  0.93\\
3  & 9.33e-4&  2.00& 1.60e-1& 1.00 &1.82e-3& 1.60&1.59e-1&  0.97\\
4  & 2.44e-4&  1.93& 7.95e-2& 1.01 &4.91e-4& 1.89&7.96e-2&  1.00\\
5  & 6.94e-5&  1.82& 3.97e-2& 1.00 &1.25e-4& 1.97&3.98e-2&  1.00\\
6  & 1.33e-5&  2.39& 1.98e-2& 1.00 &3.14e-5& 2.00&1.99e-2&  1.00\\
7  & 5.35e-6&  1.31& 9.86e-3& 1.01 &7.85e-6& 2.00&9.90e-3&  1.01\\
8  & 2.83e-6&  0.92& 4.89e-3& 1.01 &1.93e-6& 2.02&4.91e-3&  1.01\\
9  & 4.75e-7&  2.57& 2.40e-3& 1.03 &4.93e-7& 1.97&2.41e-3&  1.03\\
10 & 2.24e-7&  1.09& 1.15e-3& 1.06 &1.19e-7& 2.05&1.16e-3&  1.06\\
11 & 5.60e-8&  2.00& 5.75e-4& 1.01 &3.06e-8& 1.97&5.35e-4&  1.12\\ \hline
\end{tabular}
\end{center}
\end{table}

\medskip
\noindent{\it Experiment~2}. The second experiment is still for a 2D problem, but now
we test the method for a PDE posed on a surface with boundary. This case was not covered by the theory
in this paper. Let $\Gamma$ be a part of the curve $y=\sqrt{x}$ for $s\in(0,2)$, where $s$ is the arc length of  $\Gamma$ from the origin.  We are looking for the solution to the problem
\[
-\Delta_{\Gamma} u + u = f\quad \text{on}~ \Gamma,\qquad
u'(0) = u'(2) = 0
\]
The right hand side $f(s)$ is taken such that the exact solution is $u(s)=\cos(4 \pi s)$.

In this experiment, $\phi_h$ was used to define the narrow band $\Omega_h$ in \eqref{Omega_h}. The approximate signed distance function was computed  using the Matlab implementation of the closest point method  from \cite{Maurer}, which also gives the approximate projection $\bp$ needed to find the extension $f^e$ on $\Omega_h$. The extended problem uses approximate Hessian matrix recover from the distance function.

The unfitted finite element scheme is slightly modified to allow for Neumann conditions on both part of the boundary $\{(x, y): \|\phi(x,y)\| =kh\}$ and $\{(x, y) : s = 0\ or\ s=2\}$, and to handle the end points of $\Gamma$, as shown in Figure~\ref{fig1a}(a). $H^1(\Gamma)$ and $L^2(\Gamma)$ error norms for this experiment are shown in Table~\ref{tab2}. The optimal convergence order is clearly seen in the energy norm.
In the $L^2(\Gamma)$  norm the convergence pattern is slightly less regular, but close to the optimal order as well. Same conclusions hold if we set $\bH_h=0$ in $\Omega_h$.

\begin{table}[htbp]
\small
\caption{Error norms and estimated convergence orders  in Experiment 2 with $d=3h$.\label{tab2}}
\begin{center}
\begin{tabular}{r|rrrr|rrrr}
  & \multicolumn{4}{c|}{$\bH_h\approx\bH$}  & \multicolumn{4}{c}{$\bH_h=0$}  \\ \hline
$\ell$ & $L_2(\Gamma)$ &  & $H^1(\Gamma)$ &  & $L_2(\Gamma)$ &  &$H^1(\Gamma)$ & \\ \hline
0  & 8.54e-4 &      & 2.70e-2 &      & 8.95e-4 &      & 2.66e-2 & \\
1  & 2.22e-4 & 1.95 & 1.18e-2 & 1.19 & 2.10e-4 & 2.09 & 1.16e-2 & 1.20 \\
2  & 7.13e-5 & 1.64 & 5.69e-3 & 1.06 & 9.73e-5 & 1.11 & 5.42e-3 & 1.10 \\
3  & 1.64e-5 & 2.12 & 2.82e-3 & 1.01 & 2.01e-5 & 2.28 & 2.66e-3 & 1.03 \\
4  & 3.67e-6 & 2.16 & 1.41e-3 & 1.00 & 4.60e-6 & 2.13 & 1.33e-3 & 1.00 \\
5  & 1.04e-6 & 1.82 & 7.01e-4 & 1.01 & 1.69e-6 & 1.44 & 6.63e-4 & 1.00 \\
6  & 8.14e-7 & 0.35 & 3.48e-4 & 1.01 & 1.55e-6 & 0.13 & 3.28e-4 & 1.02 \\
7  & 1.36e-7 & 2.58 & 1.72e-4 & 1.02 & 2.80e-7 & 2.47 & 1.62e-4 & 1.02 \\
8  & 1.53e-8 & 3.15 & 8.38e-5 & 1.04 & 3.67e-8 & 2.93 & 7.88e-5 & 1.04 \\ \hline
 \end{tabular}
\end{center}
\end{table}

\begin{figure} [!ht]
\centering
\includegraphics[width=.45\textwidth]{./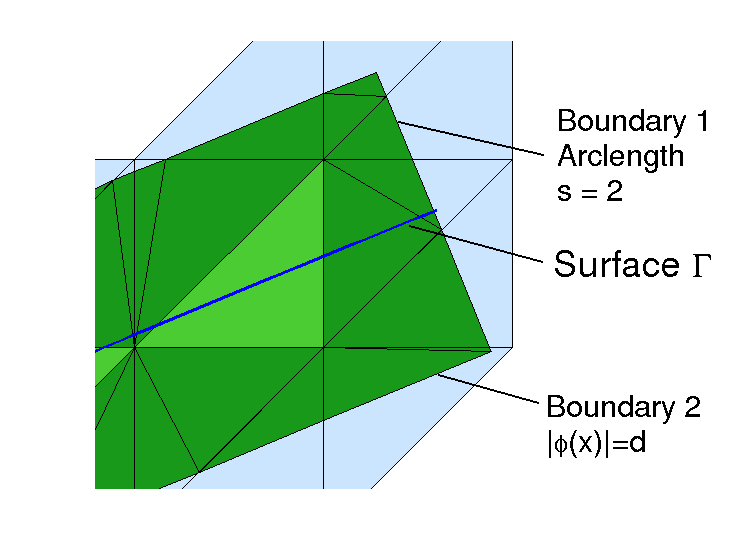}\qquad
\includegraphics[width=.47\textwidth]{./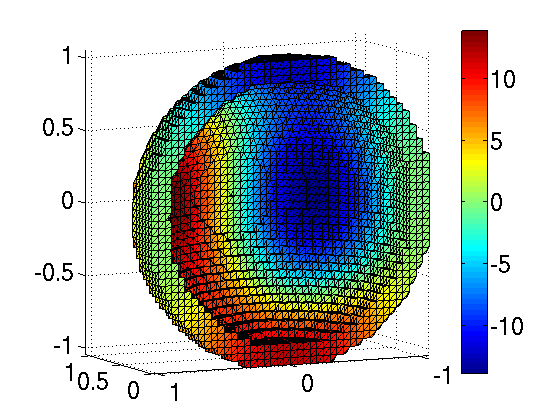}
\caption{Left: Unfitted narrow-band mesh for a surface with boundary in Experiment~2  \label{fig1a}; Right: Cutaway of a narrow band domain and numerical solution (full active tetrahedra from $\cT_d$ are shown, while integration is performed over cut tetrahedra, i.e. over $\Omega_h$). \label{overflow2}}
\end{figure}

\medskip
\noindent{\it Experiment~3}.
As the next test problem,
 we consider the Laplace--Beltrami equation \eqref{LBeq} on the unit sphere,
 $\Gamma=\{ \bx\in\R^3\mid \|\bx\|_2 = 1\}$.
The source term $f$ is taken such that the solution is given by
\[
    u(\bx)= \frac{12}{\|\bx\|^3}\left(3x_1^2x_2 - x_2^3\right),\quad
    \bx=(x_1,x_2,x_3)\in\Omega.
\]
Note that $u$ and $f$ are constant along normals at $\Gamma$.

We perform a regular  uniform tetrahedra subdivision  of $\Omega=(-2,2)^3$ with $h=2^{1-\ell}\times 10^{-1}$. Thus the grid is not aligned with $\dO_d$. We further refine only those elements which have non-empty intersection with  $\Omega_d$.
As before, we use piecewise affine continuous finite elements.
Optimal  convergence rates in $H^1(\Gamma)$ and $L^2(\Gamma)$ norms are observed with the narrow band width $d=h$ both with the exact choice  of $\bH_h=\bH$ and $\bH_h=0$, see Table~\ref{tab3}. The cutaway of $\Omega_h$ and computed solution with $d=h$ are illustrated in Figure~\ref{overflow2}.

\begin{table}[htbp]
\small
\caption{Error norms and estimated convergence orders  in Experiment 3 with $d=h$.\label{tab3}}
\begin{center}
\begin{tabular}{cr|rrrr|rrrr}
\multicolumn{2}{c|}{} & \multicolumn{4}{c|}{$\bH_h=\bH$}  & \multicolumn{4}{c}{$\bH_h=0$}  \\ \hline
 $\ell$    &\#d.o.f.&  $L_2(\Gamma)$ &  & $H^1(\Gamma)$ &  & $L_2(\Gamma)$ &  &$H^1(\Gamma)$ & \\ \hline
0&   1432 & 4.94e-1 &      & 2.78e+0 &     & 6.40e-1 &      & 3.01e+0 & \\
1&   5474 & 1.39e-1 & 1.83 & 5.67e-1 & 2.29& 1.64e-1 & 1.96 & 5.55e-1 & 2.44\\
2&  22084 & 3.62e-2 & 1.94 & 1.89e-1 & 1.58& 4.23e-2 & 1.96 & 1.87e-1 & 1.57\\
3&  88122 & 8.98e-3 & 2.01 & 8.08e-2 & 1.22& 1.05e-2 & 2.00 & 8.09e-2 & 1.21\\
4& 353920 & 2.35e-3 & 1.93 & 3.85e-2 & 1.07& 2.74e-3 & 1.94 & 3.86e-2 & 1.07\\
5&1416810 & 5.91e-4 & 1.99 & 1.90e-2 & 1.02& 6.93e-4 & 1.98 & 1.90e-2 & 1.02\\ \hline
\end{tabular}
\end{center}
\end{table}

\medskip
\noindent{\it Experiment~4}.
We repeat the previous experiment, but now for the equation posed on a torus instead of the unit sphere.
Let $\Gamma = \{ \bx\in\Omega \mid r^2 = x_3^2 + (\sqrt{x_1^2 + x_2^2} - R)^2\}$.
We take $R= 1$ and $r= 0.6$. In the coordinate system $(\rho, \phi,%
\theta)$, with
\[
    \bx = R\begin{pmatrix}\cos\phi \\ \sin\phi \\ 0 \end{pmatrix}
    + \rho\begin{pmatrix}\cos\phi\cos\theta \\ \sin\phi\cos\theta \\
    \sin\theta \end{pmatrix},
\]
the $\rho$-direction is normal to $\Gamma$, $\frac{\partial \bx}{\partial\rho}\perp\Gamma$ for $\bx\in\Gamma$.
The following solution $u$ and the corresponding right-hand side $f$ are constant in the normal direction:
\begin{equation}\label{polar2}
  \begin{split}
    u(\bx)&= \sin(3\phi)\cos( 3\theta + \phi),\\
    f(\bx)&= r^{-2} (9\sin( 3\phi)\cos( 3\theta + \phi)) \\
          & \quad + (R + r\cos( \theta))^{-2}(10\sin( 3\phi)\cos(3\theta + \phi) + 6\cos( 3\phi)\sin( 3\theta + \phi)) \\
          & \quad - (r(R + r\cos( \theta)))^{-1}(3\sin( \theta)\sin( 3\phi)\sin( 3\theta + \phi)) + u(\bx).
  \end{split}
\end{equation}

Near optimal convergence rates in $H^1(\Gamma)$ and $L^2(\Gamma)$ norms are observed with the narrow band width $d=h$, both with the exact choice  of $\bH_h=\bH$ and $\bH_h=0$. The surface norms of approximation errors for the example of torus are given in Table~\ref{tab4}. The solution is visualized in Figure~\ref{fig_torus}.

\begin{table}[htbp]
\small
\caption{Error norms and estimated convergence orders  in Experiment 4 with $d=h$.\label{tab4}}
\begin{center}
\begin{tabular}{cr|rrrr|rrrr}
& &\multicolumn{4}{c|}{$\bH_h=\bH$}  & \multicolumn{4}{c}{$\bH_h=0$}  \\ \hline
$\ell$& \# d.o.f.   & $L_2(\Gamma)$&     & $H^1(\Gamma)$&   & $L_2(\Gamma)$ &     &$H^1(\Gamma)$ &    \\ \hline
1 & 10094  &7.16e-2       &     & 3.03e+0     &    & 7.64e-2 &      & 4.29e+0 &       \\
2 & 41018  &1.93e-2       & 1.89& 1.35e+0     &1.17& 2.04e-2 & 1.91 & 1.60e+0 & 1.42  \\
3 & 165244 &4.95e-3       & 1.96& 6.30e-1     &1.10& 5.28e-3 & 1.95 & 6.63e-1 & 1.27  \\
4 & 664090 &1.26e-3       & 1.98& 2.96e-1     &1.09& 1.31e-3 & 2.01 & 3.04e-1 & 1.12  \\
5 & 2656782& 3.06e-4      & 2.04& 1.48e-1     &1.00& 3.23e-4 & 2.02 & 1.48e-1 & 1.04  \\ \hline
\end{tabular}
\end{center}
\end{table}

\begin{figure} [!ht]
\label{fig_torus}
\centering
\includegraphics[width=0.7\textwidth]{./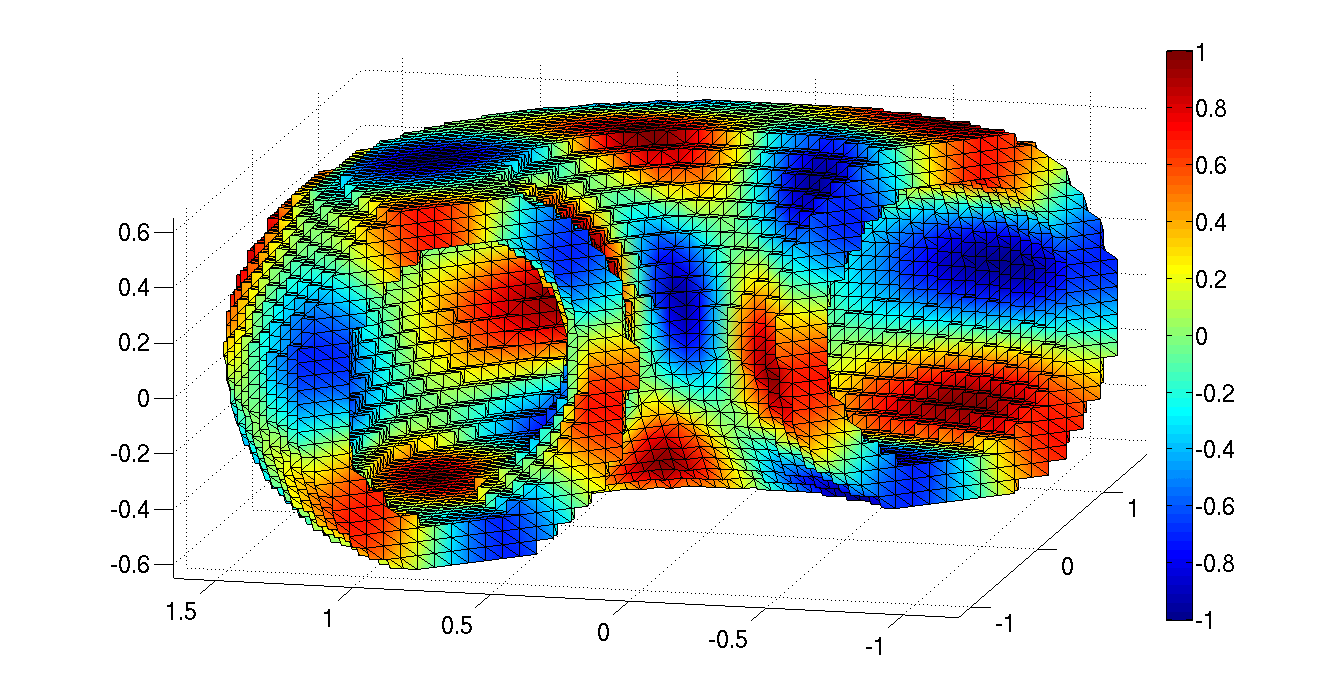}\qquad
\caption{Cutaway of a narrow band domain  and numerical solution for experiment 4 (full active tetrahedra are shown, while integration is performed over cut tetrahedra).}
\end{figure}

\medskip

\noindent{\it Experiment~5}.  Finally, we perform a few experiments with higher order finite element approximations. According to the analysis of section~\ref{s_error}, to achieve the optimal error convergence, one has to reconstruct $\phi$
and $\bH$ with higher accuracy, cf. \eqref{phi_h}, \eqref{Hh}. For example, $P_2$ elements call for the
approximation of $\phi$ with a piecewise second order polynomial function $\phi_h$, while a first order
reconstruction of $\bH$ is sufficient. The main technical difficulty here is implementing a sufficiently accurate
and cost efficient  numerical integration over implicitly given curved triangles, resulting from the intersection of $\Omega_h$
with a bulk triangulation.
In this paper, we get around this difficulty by constructing local (inside each cut triangle)
piecewise linear approximation of $\phi_h$ with some $h'=O(h^q)$, with $q$ from \eqref{phi_h}, i.e.
$h'=O(h^2)$ for $P_2$ elements and  $h'=O(h^3)$ for $P_3$ elements.  Further, an integral over a cut element is computed as 
a sum of integrals over the resulting set of smaller triangles;
\begin{wrapfigure}{l}{0.5\textwidth}
\label{fig:P2}
\centering
\includegraphics[width=0.5\textwidth]{./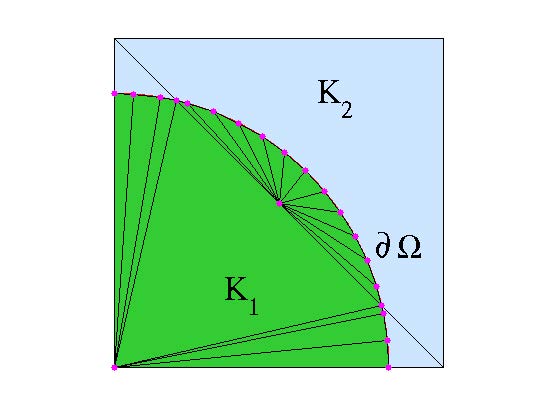}\qquad
\caption{Local subdivisions of cut triangles for the numerical integration of $P_2$ elements.}
\end{wrapfigure}
 see Figure~\ref{fig:P2} for the illustration of how such local triangulations were constructed for two cut triangles $K_1$ and $K_2$ of a bulk triangulation (FE functions are integrated over the green area).
The approach is suboptimal with respect to how the computational cost of building stiffness matrices scales if $h\to0$. We use this approach to illustrate the error analysis of this paper. The results of numerical experiments for the same problem as in Experiment~1 are shown in Table~\ref{table7}. Here we set $d=3h$, $\bH_h=\bH$ and observe optimal convergence order for $P_2$ and $P_3$ finite elements (the convergence stagnation for fine levels with $P_3$ elements is due to the influence of rounding off errors in our implementation). Unlike the case of $P_1$ finite elements, setting $\bH_h=0$ led to suboptimal convergence rates. This is consistent with the analysis.

\begin{table}[ht]\small
\caption{Error norms and estimated convergence orders for higher order elements in Experiment 5.\label{table7}}
\begin{center}
\begin{tabular}{r|rrrrr|rrrrr}
\multicolumn{1}{c|}{} & \multicolumn{5}{c|}{$P_2$ FE} & \multicolumn{5}{c}{$P_3$ FE} \\ \hline
 $\ell$    &\#d.o.f.& $L_2(\Gamma)$   &  & $H^1(\Gamma)$&  &\#d.o.f. & $L_2(\Gamma)$&  & $H^1(\Gamma)$&  \\ \hline
0 & 181  & 1.30e-3&     &7.36e-2 &      &  379  & 8.47e-5 &     & 7.13e-3 &     \\
1 & 357  & 1.75e-4&2.89 &2.03e-2 & 1.86 &  751  & 5.31e-6 & 4.00& 8.51e-4 &3.07 \\
2 & 709  & 2.31e-5&2.92 &5.29e-3 & 1.94 &  1495 & 3.24e-7 & 4.03& 1.04e-4 &3.03 \\
3 & 1381 & 2.96e-6&2.96 &1.35e-3 & 1.97 & 2911  & 1.95e-8 & 4.06& 1.27e-5 &3.03 \\
4 & 2817 & 3.73e-7&2.99 &3.38e-4 & 2.00 & 5950  & 2.98e-9 & 2.71& 1.55e-6 &3.03 \\
5 & 5629 & 4.69e-8&2.99 &8.48e-5 & 1.99 & 11893 & 2.55e-8 &-3.10& 2.08e-7 &2.90  \\
6 & 11261& 5.90e-9&2.99 &2.14e-5 & 1.99 & && & &  \\
\hline
\end{tabular}
\end{center}
\end{table}

\section{Conclusions} \label{s_concl}

We studied a formulation and a finite element method for elliptic partial differential equation
posed on hypersurfaces in $\mathbb{R}^N$, $N=2,3$. The formulation uses an extension of the equation off the surface to a volume domain containing the surface. The extension introduced in the paper results in uniformly elliptic problems in the volume domain.  This enables a straightforward application of standard discretization techniques, including higher order finite element methods. The method can be applied in a narrow band (although this is not a necessary requirement) and can be used with meshes not fitted to surface or computational domain boundary.  Numerical analysis reveals the sufficient conditions for the method to have  optimal convergence
order in the energy norm.
For $P_1$ finite elements and $h$-narrow band, the optimal convergence is achieved for a particular simple formulation. For higher order elements, an optimal complexity efficient implementation of the method is a subject of current research and will be reported in a follow-up paper.



\begin{thebibliography}{99}


\bibitem{Hessian1} A. Agouzal and  Yu. Vassilevski,  {\em On a discrete Hessian recovery for P1 finite elements}, Journal of Numerical Mathematics 10 (2002), pp.~1--12.

\bibitem{Axelsson} O. Axelsson. {\em Iterative Solution Methods}, Cambridge University Press, Cambridge, 1994.

\bibitem{Aubin} T. Aubin, {\em  Nonlinear analysis on manifolds, Monge-Ampere equations.} (Vol. 252). Springer, 1982.

\bibitem{BarrettElliott} J.W. Barrett and  C.M. Elliott, {\em A finite-element method for solving elliptic equations with Neumann data on a curved boundary using unfitted meshes}, IMA J. Numer. Anal. 4 (1984), pp.~309--325.
%
\bibitem{BCOS01} {M. Bertalmio, L.T. Cheng, S. Osher, and G. Sapiro}, {\em Variational problems and
partial differential equations on implicit surfaces: The framework and examples
in image processing and pattern formation}, J. Comput. Phys. 174 (2001), pp.~759--780.


\bibitem{Burger} {M. Burger}, {\em Finite element approximation of elliptic partial differential equations on implicit surfaces}, Comp. Vis. Sci.,  12 (2009),  pp.~87--100.

\bibitem{ChOlsh}   A.Y. Chernyshenko and M.A. Olshanskii, {\em  Non-degenerate Eulerian finite element method for solving PDEs on surfaces},
{ Rus. J. Num. Anal. Math. Model.} {28} (2013), pp.~101--124.

\bibitem{DDEH} {K. Deckelnick, G. Dziuk, C. M. Elliott, and C.J. Heine}, {\em  An h-narrow band finite-element method for elliptic equations on implicit surfaces},  IMA J. Numer. Anal. 30 (2010), pp.~351--376.

\bibitem{DD07} {A. Demlow and G. Dziuk}, {\em  An adaptive finite element method for the
Laplace-Beltrami operator on implicitly defined surfaces}, SIAM J. Numer. Anal.
45 (2007), pp.~421--442.

\bibitem{Demlow09} {A. Demlow}, {\it Higher-order finite element methods and pointwise error estimates for elliptic problems on surfaces}, SIAM J. Numer. Anal. 47  (2009), pp.~805--827.

\bibitem{vector} U. Diewald, T. Preufer and M. Rumpf, {\em  Anisotropic diffusion in vector field visualization on Euclidean domains and surfaces}, IEEE Trans. Visualization Comput. Graphics, 6 (2000), pp.~139--149.


\bibitem{Dziuk88} {G. Dziuk}, {\em Finite elements for the Beltrami operator on arbitrary surfaces},
Partial Differential Equations and Calculus of Variations (S. Hildebrandt \& R.
Leis eds). Lecture Notes in Mathematics, V. 1357 (1988). Berlin: Springer,
pp.~142--155.

\bibitem{DE2013} {G. Dziuk and   C. M. Elliott}, {\it Finite element methods for surface PDEs}, Acta Numerica (2013), pp.~289--396.


\bibitem{Evans}  L. C. Evans, {\em  Partial differential equations}. Graduate studies in mathematics V.2, American mathematical society, 1998.

\bibitem{Grisvard} P. Grisvard, {\em Elliptic problems in nonsmooth domains}, V. 24 of Monographs and Studies in Mathematics,  Pitman: Boston, 1985.

\bibitem{membrains1} C. M. Elliott and B. Stinner,  {\em Modeling and computation of two phase geometric biomembranes using surface finite elements}, Journal of Computational Physics, 229 (2010), pp.~6585--6612.

\bibitem{ER2013} C. M. Elliott and T. Ranner,  {\em Finite element analysis for coupled bulk-surface partial
differential equation}, IMA J Numer Anal 33 (2013), 377--402.

\bibitem{DistFunc} R. L. Foote, {\em  Regularity of the distance function}, Proc. Amer. Math. Soc., 92 (1984), pp.~153--155.

\bibitem{Greer06} {J. B. Greer},  {\em An improvement of a recent Eulerian method for solving PDEs
on general geometries},  J. Sci. Comput., 29 (2006), pp.~321--352.


\bibitem{GrossReuskenBook}   S. Gross and A. Reusken,  {\em  Numerical methods for
two-phase incompressible flows},  Springer Series in Computational Mathematics V.40, Springer-Verlag, 2011.


\bibitem{lungs} D. Halpern, O.E. Jensen, and J.B. Grotberg, {\em A theoretical study of surfactant and liquid delivery into the lung}, J. Appl. Physiol. 85 (1998)  pp.~333--352.

\bibitem{Hansbo} A. Hansbo and P. Hansbo, {\em An unfitted finite element method, based on Nitsche's method, for elliptic interface problems}, Comput. Methods Appl. Mech. Engrg. 191 (2002), pp.~5537--5552.


\bibitem{Maurer} {C. Maurer, R. Qi, and V. Raghavan}, {\em A Linear Time
Algorithm for Computing Exact Euclidean Distance Transforms of Binary Images in Arbitrary Dimensions} IEEE
Transactions on Pattern Analysis and Machine Intelligence  25 (2003), pp. 265--270.


\bibitem{grain2} W. W. Mullins, {\em Mass transport at interfaces in single component system}, Metallurgical
and Materials Trans. A, 26 (1995), pp.~1917--1925.

\bibitem{Integr} B. Muller, F. Kummer, and M. Oberlack, {\em  Highly accurate surface and volume integration on implicit domains by means of moment-fitting}, International Journal for Numerical Methods in Engineering  96 (2013), pp.~512--528.

\bibitem{ORG09} {M.A. Olshanskii, A. Reusken, and J. Grande}, {\em A Finite Element method for elliptic  equations on   surfaces}, SIAM J. Numer. Anal.  47 (2009),  pp.~3339--3358.

\bibitem{OlsR2009} {M.A. Olshanskii and A. Reusken}, {\em A finite element method for surface PDEs: Matrix properties}, Numerische Mathematik 114 (2010),  pp.~491--520.


\bibitem{imaging2} A.  Toga, Brain Warping. Academic Press, New York, (1998).

\bibitem{texture1} G. Turk, {\em Generating textures on arbitrary surfaces using reaction-diffusion}, Comput. Graphics, 25 (1991), 289--298.


\bibitem{Hessian0} M.-G. Vallet, C.-M. Manole, J. Dompierre, S. Dufour, F. Guibault, {\em Numerical comparison of some Hessian recovery techniques}, International Journal for Numerical Methods in Engineering, 72 (2007), pp.~987--1007.

\bibitem{XuZhao} J. Xu and  H.-K. Zhao, {\em An Eulerian formulation for solving partial differential
equations along a moving interface}, J. Sci. Comput. 19 (2003), pp.~573--594.



\end{thebibliography}
\end{document}